\documentclass[a4paper,11pt]{amsart}
\usepackage{amsmath}
\usepackage{url,graphicx}
\usepackage{amssymb, color, pstricks-add, manfnt}
\usepackage{amsmath, amsthm, amsfonts}
\usepackage{mathrsfs}
\usepackage{cite}
\usepackage{multirow}
\usepackage{makecell}
\usepackage{diagbox}
\usepackage{tikz}
\usetikzlibrary{matrix, fit}
\usepackage{enumerate}
\usepackage[none]{hyphenat}

\theoremstyle{plain}
\newtheorem{Theorem}{Theorem}[section]
\newtheorem*{Theorem*}{Theorem}
\newtheorem*{Conjecture*}{Conjecture}
\newtheorem{Lemma}{Lemma}[section]

\newtheorem{Corollary}{Corollary}[section]

\theoremstyle{remark}
\newtheorem{remark}{Remark}

\numberwithin{equation}{section}
\numberwithin{figure}{section}
\numberwithin{remark}{section}

\usepackage{autobreak}
\allowdisplaybreaks

\usepackage{hyperref}
\usepackage{cite}

\begin{document}
\begin{sloppypar}

\title{New Newton-type inequality and a unified perspective on Maclaurin's inequality via the inclusion relations of cones}

 \author{Ning Cao}
 \address{School of Mathematics and Shing-Tung Yau Center of Southeast University, Southeast University, Nanjing 211189, P.R. China}
 \email{caoning@seu.edu.cn}	

 \author{Xuanxuan Hu}
 \address{Center for Mathematics and Interdisciplinary Sciences, Fudan University, Shanghai 200433, P.R. China; Shanghai Institute for Mathematics and Interdisciplinary Sciences, Shanghai, 200433, P.R. China}
 \email{25114020008@m.fudan.edu.cn}

 \author{Feida Jiang$^*$}
 \address{School of Mathematics and Shing-Tung Yau Center of Southeast University, Southeast University, Nanjing 211189, P.R. China; Shanghai Institute for Mathematics and Interdisciplinary Sciences, Shanghai, 200433, P.R. China}
 \email{jiangfeida@seu.edu.cn}	
	
	\subjclass[2020]{05E05, 35A23, 26D15}

	\date{\today}
	\thanks{*corresponding author}
	
		\keywords{Newton-type inequality; Maclaurin's inequality; inclusion relations of cones; elementary symmetric functions; $\Pi_k$ symmetric functions}

	\begin{abstract}
      The classical Newton-Maclaurin's inequalities for the elementary symmetric means are well known. In this paper, we derive a new class of Newton-type inequalities and generalize Maclaurin's inequality by connecting the elementary symmetric functions with their counterparts. Our idea is mainly motivated by the inclusion relations of G$\mathring{\text{a}}$rding's cones and $k$-convex cones.
	\end{abstract}

	\maketitle		

\section{Introduction}\label{Section 1}

The $k$-th elementary symmetric function in $n$ variables $x_1,...,x_n$ is defined by
\begin{equation} \label{sigma}
    \sigma_k(x):= \sum_{1\leq i_1<\cdots<i_k\leq n} x_{i_1}\cdots x_{i_k},
\end{equation}
where $1\leq k\leq n$, and $x=(x_1,...,x_n)\in\mathbb{R}^n$. For convenience, we usually define $\sigma_0(x):=1$ and $\sigma_k(x):=0$ if $k<0$ or $k>n$. Furthermore, the $k$-th elementary symmetric mean is defined by 
\begin{equation*}
    E_k(x) := \frac{\sigma_k(x)}{C_n^k},
\end{equation*}
where $0\leq k\leq n$, and $C_n^k:=\frac{n!}{k!(n-k)!}$ denotes the combination number. The elementary symmetric functions have a wide range of applications in fully nonlinear partial differential equations and can be used to define the $k$-Hessian operators, which is extensively studied in fully nonlinear partial differential equations related to geometric problems, e.g., see \cite{chou2006lp, liu2010brunn, guan2003christoffel}, etc. For more details on $k$-Hessian operators, one can refer to \cite{2009The, caffarelli1985dirichlet, trudinger1995dirichlet}.

There is another class of symmetric functions, less well known but also important, which we call $\Pi_k$ symmetric functions. The $\Pi_k$ symmetric functions can be regarded as the counterpart of the $k$-th elementary symmetric function, and is defined as follows:
\begin{equation} \label{Pi}
    \Pi_k(x):= \prod_{1\leq i_1<\cdots<i_k\leq n} (x_{i_1}+\cdots +x_{i_k}),
\end{equation}
where $1\leq k\leq n$, and $x=(x_1,...,x_n)\in\mathbb{R}^n$. The $\Pi_k$ symmetric functions also play an important role in fully nonlinear partial differential equations and can be used to define the $\Pi_k$-Hessian operators, which are closely related to $k$-convex hypersurfaces in differential geometry, see \cite{Harvey2013, sha1986p, sha1987handlebodies}, etc.

Comparing these two classes of functions, their order of operations is exactly opposite. Moreover, it is obvious that $ \Pi_1(x) = \sigma_n(x) = \prod_{i=1}^n x_i$ and $\Pi_n(x) = \sigma_1(x) = \sum_{i=1}^n x_i$.

Next, we introduce the G$\mathring{\text{a}}$rding's cones $\Gamma_k$ and the $k$-convex cones $\mathcal{P}_k$ in $\mathbb{R}^n$, which are defined as follows:
\begin{equation*}
    \Gamma_k:=\{ x=(x_1,...,x_n)\in\mathbb{R}^n : \sigma_j(x)>0,\ \forall \ 1\leq j\leq k  \},
\end{equation*}
and
\begin{equation*}
    \mathcal{P}_k:=\{ x=(x_1,...,x_n)\in\mathbb{R}^n : x_{i_1}+\cdots+x_{i_k}>0,\ \forall \ 1\leq i_1<\cdots< i_k \leq n \},
\end{equation*}
where $1\leq k\leq n$. It is straightforward that $\Gamma_j\subset\Gamma_k$ and $\mathcal{P}_k\subset\mathcal{P}_j$ whenever $1\leq k<j\leq n$, and $\Gamma_n=\mathcal{P}_1$ is the positive cone, which is denoted by $K^+$ in this paper. The G$\mathring{\text{a}}$rding's cones and $k$-convex cones have significant applications in fully nonlinear partial differential equations. These two classes of cones can provide admissible sets for solutions of the equations, which guarantee the ellipticity of the $k$-Hessian operators and $\Pi_k$-Hessian operators, respectively. For more details on the G$\mathring{\text{a}}$rding's cones and $k$-convex cones as well as their applications to fully nonlinear partial differential equations, we refer the reader to \cite{ivochkina2012garding, ivochkina2014garding, Harvey2013}.

In \cite{li2020inclusion}, Li-Cui-Jiang studied the inclusion relations of the G$\mathring{\text{a}}$rding's cones and the $k$-convex cones, they proved a basic inclusion relations of the G$\mathring{\text{a}}$rding's cone $\Gamma_{n-k+1}$ and the $k$-convex cone $\mathcal{P}_k$, which is $\Gamma_{n-k+1}\subset\mathcal{P}_k$, where $1\leq k\leq n$. Combining the inclusion relations of G$\mathring{\text{a}}$rding's cones and $k$-convex cones themselves, we can obtain a basic network of the inclusion relations of these cones in $\mathbb{R}^n$, as shown in Figure \ref{Fig1.1}.

\begin{figure}[htbp] 
\centering
\[
\begin{array}{ccccccccccccc}
\mathcal{P}_1 & \subset & \mathcal{P}_2 & \subset & \cdots & \subset & \mathcal{P}_{k} & \subset & \cdots & \subset & \mathcal{P}_{n-1} & \subset & \mathcal{P}_n \\[2pt]
\scalebox{1.2}{$\shortparallel$} & & \cup & & \cdots & & \cup & & \cdots & & \cup & & \scalebox{1.2}{$\shortparallel$} \\[1pt]
\Gamma_n & \subset & \Gamma_{n-1} & \subset & \cdots & \subset & \Gamma_{n-k+1} & \subset & \cdots & \subset & \Gamma_2 & \subset & \Gamma_1
\end{array}
\]
\caption{The basic inclusion relations of G$\mathring{\text{a}}$rding's cones and $k$-convex cones}
\label{Fig1.1}
\end{figure}

The inclusion relations shown in Figure \ref{Fig1.1} is the basic starting point for our generalization of the Maclaurin's inequality. But we find that the inclusion relations proved in \cite{li2020inclusion} do not cover all cases. Thus, in this paper, we also clarify all possible inclusion relations among these two classes of cones. Firstly, it is essential to recall the well-known Newton's inequality and Maclaurin's inequality for the elementary symmetric means, which are stated as follows.

\begin{Theorem*} [Newton's inequality, \cite{Newton1707}] \label{Th1.1}
    Let $1\leq k< n$. For any $x=(x_1,...,x_n)\in\mathbb{R}^n$, then the following inequality holds:
\begin{equation} \label{N1}
    E_k^2(x) \geq E_{k-1}(x) E_{k+1}(x),
\end{equation}
    and the equality holds if and only if $x_1 = \cdots = x_n$ or both sides of the inequality are zero values.
\end{Theorem*}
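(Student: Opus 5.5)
The plan is the classical reduction of Newton's inequality to the case $n=2$ by differentiation, using Rolle's theorem. The argument has three steps.

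\textbf{Step 1: a polynomial with the given roots.} Given $x\in\mathbb{R}^n$, we set
\[
P(t)=\prod_{i=1}^n (t+x_i)=\sum_{j=0}^n C_n^j E_j(x)\, t^{n-j}.
\]
This polynomial has only real roots. By Rolle's theorem, every derivative of $P$ also has only real roots. The same holds for the reversed polynomial $t^m Q(1/t)$ whenever $Q$ is real-rooted of degree $m$; a zero root only lowers the degree, and it is handled by a limiting or degenerate-degree remark.

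\textbf{Step 2: reduction to a quadratic.} We differentiate $P$ exactly $n-k-1$ times. The $t$-degree is chosen so that the coefficients of $E_{k-1},E_k,E_{k+1}$ survive in the lowest three terms after reversal. Up to a positive constant, the derivative is
\[
\sum_{j=0}^{k+1} C_{k+1}^j E_j(x)\, t^{k+1-j}.
\]
This rests on the identity
\[
C_n^j\,\frac{(n-j)!}{(k+1-j)!}=\frac{n!}{(k+1)!}\,C_{k+1}^j,
\]
which is the step to check carefully. Next, we reverse the polynomial and differentiate $k-1$ more times. This leaves, up to a positive constant, the real-rooted quadratic
\[
E_{k-1}(x)\,s^2+2E_k(x)\,s+E_{k+1}(x).
\]
Its discriminant is nonnegative, which gives $E_k^2\ge E_{k-1}E_{k+1}$.

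\textbf{Step 3: the equality case.} This is the main obstacle. The reduction may degenerate when $E_{k-1}=0$, since then the quadratic becomes linear or zero; this is the "both sides zero" clause. Otherwise, equality means the final quadratic has a double root. One then traces back through the Rolle steps and shows that every intermediate polynomial must have all roots equal.

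The key fact for this is the following. If $Q$ is real-rooted of degree $m\ge 2$ and $Q'$ has a root of multiplicity $m-1$, then $Q$ has a single root of multiplicity $m$. This follows from the interlacing of the roots of $Q'$ with those of $Q$. Reversal behaves well here as long as no root is zero; zero roots fall into the degenerate case.

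If tracing back multiplicities gets messy, the fallback is an induction on $n$ using $E_j(x)=\tfrac{n-j}{n}E_j(x')+\tfrac{j}{n}x_nE_{j-1}(x')$, but I would first try the Rolle/interlacing argument.
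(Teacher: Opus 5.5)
Your argument is correct, and it is the classical real-rootedness proof: Rolle's theorem applied to $\prod_i(t+x_i)$, then reversal, then the discriminant of the resulting quadratic. The paper does not prove this statement itself; it cites Hardy--Littlewood--P\'olya and Steele for exactly this argument.

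There are two small slips, neither of which breaks the proof.
\begin{itemize}
\item The quadratic you end with is actually $\frac{(k+1)!}{2}\bigl(E_{k+1}s^2+2E_ks+E_{k-1}\bigr)$. So the degree drops when $E_{k+1}=0$, not when $E_{k-1}=0$. This changes nothing: the discriminant is symmetric in the two outer coefficients, and the equality case still splits into ``both sides zero'' versus $E_{k-1}E_{k+1}=E_k^2>0$.
\item Your interlacing fact is false for $m=2$, since every real-rooted quadratic has a linear derivative. In the back-tracing, however, you only apply it with $m\ge 3$, where it holds.
\end{itemize}
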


\begin{Theorem*} [Maclaurin's inequality, \cite{Maclaurin1729}] \label{Th1.2}
    Let $1\leq j< l\leq n$. For any $x=(x_1,...,x_n)\in\overline{\Gamma}_l$, then the following inequality holds:
\begin{equation} \label{M1}
     E_j^{\frac{1}{j}}(x) \geq E_l^{\frac{1}{l}}(x),
\end{equation}
    and the equality holds if and only if $x_1 = \cdots = x_n$ or both sides of the inequality are zero values.
\end{Theorem*}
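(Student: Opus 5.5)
We prove the Maclaurin inequality \eqref{M1} on $\overline{\Gamma}_l$ by deriving it from Newton's inequality \eqref{N1}, using the positivity of the lower elementary symmetric means on the G$\mathring{\text{a}}$rding cone. By continuity it suffices to treat $x\in\Gamma_l$ and then let $x$ approach a boundary point, since both sides of \eqref{M1} are continuous on $\overline{\Gamma}_l$. For $x\in\Gamma_l$ we have $E_1(x),\dots,E_l(x)>0$ by definition of the cone, so all fractional powers are defined and Newton's inequality may be divided freely.

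\textbf{Step 1: a chain of ratios.} From \eqref{N1} with $1\le k\le l-1$ and all quantities positive, we get
\[
\frac{E_k}{E_{k-1}}\;\ge\;\frac{E_{k+1}}{E_k},\qquad 1\le k\le l-1.
\]
Hence the ratios $r_k:=E_k/E_{k-1}$, $1\le k\le l$, form a nonincreasing sequence.

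\textbf{Step 2: compare geometric means.} Since $E_0=1$, telescoping gives $E_m=r_1r_2\cdots r_m$, so $E_m^{1/m}$ is the geometric mean of the first $m$ terms of $(r_k)$. For a nonincreasing positive sequence, the geometric mean of the first $m$ terms is nonincreasing in $m$. Concretely, $E_m^{1/m}\ge E_{m+1}^{1/(m+1)}$ is equivalent to
\[
E_m^{m+1}\ge E_{m+1}^m,\quad\text{i.e.}\quad (r_1\cdots r_m)^{m+1}\ge (r_1\cdots r_m)^m\, r_{m+1}^m,
\]
which reduces to $r_1\cdots r_m\ge r_{m+1}^m$. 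This holds because each $r_i\ge r_{m+1}$. Iterating from $m=j$ to $m=l-1$ gives \eqref{M1} on $\Gamma_l$.

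\textbf{Step 3: the boundary and the equality case.} On $\partial\Gamma_l$ we pass to the limit in \eqref{M1}. The main care needed is to check that the $E_i$ with $i<l$ remain $\ge0$ there, which holds since $\overline{\Gamma}_l\subset\overline{\Gamma}_i$.

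For equality when $E_l(x)>0$, we are in $\Gamma_l$. Equality forces $r_1\cdots r_m=r_{m+1}^m$ for some $m$, hence $r_1=r_2$, i.e. equality in Newton's inequality at $k=1$. By the equality statement of \eqref{N1}, this gives $x_1=\cdots=x_n$, since $E_1^2=E_0E_2>0$ excludes the zero case.

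The main obstacle is the equality characterization on the boundary. There, $E_l(x)=0$ and one must argue that equality forces $E_j(x)=0$, which is exactly the ``both sides zero'' alternative. Conversely, if $E_l(x)=0$ while $E_j(x)>0$, the inequality is strict, so no further cases arise. The only delicate point is being careful that boundary points of $\Gamma_l$ really have the lower $E_i\ge0$, as noted in Step 3.
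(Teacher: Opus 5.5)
Your argument is correct and is the standard derivation of Maclaurin's inequality from Newton's inequality, which the paper cites rather than proves (\cite{hardy1952inequalities}) and imitates for $M_k$ in the proof of Corollary~\ref{Co1.2}. The one step to make explicit is your claim that $E_l(x)>0$ on $\overline{\Gamma}_l$ places $x$ in $\Gamma_l$: it follows from the inequality you have already proved on the closure, since $E_i^{1/i}(x)\ge E_l^{1/l}(x)>0$ for every $i<l$, and this is what justifies running the ratio argument in the equality case.
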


The proof of the Newton's inequality relies on the theory of real roots of polynomials, the complete proof can be found in \cite{hardy1952inequalities} or \cite{steele2004cauchy}. The Newton's inequality is stronger than the Maclaurin's inequality, so the Maclaurin's inequality follows from the Newton's inequality easily, see \cite{hardy1952inequalities}, and also see \cite{steele2004cauchy} for an intuitive geometric proof via the log-concavity. The above Newton-Maclaurin's inequalities provide a fundamental theoretical tool for the study of various problems related to the $k$-Hessian equations, e.g., see \cite{bao2003liouville, brandolini2008overdetermined, du2022bernstein, lin1994some}, etc. For more generalizations and variants of the Newton's inequality and Maclaurin's inequality, we refer the interested readers to \cite{rosset1989normalized, xu2008generalized, favaro2018general, tinaztepe2022refinement, ren2024generalization, ren2025general, bautista2025newton, hu2025newton, tao2025variant, fan2026operators}, etc.

In view of the above theorem on the Maclaurin's inequality, for any $1\leq k\leq n$, we define $E_k^\frac{1}{k}$ as the $k$-th Maclaurin mean corresponding to the G$\mathring{\text{a}}$rding's cone $\Gamma_k$. For any $1\leq j<l\leq n$, by the inclusion relations of G$\mathring{\text{a}}$rding's cones, we have $\Gamma_l\subset\Gamma_j$, the Maclaurin's inequality asserts that the corresponding Maclaurin means $E_j^\frac{1}{j}\geq E_l^\frac{1}{l}$ on $\overline{\Gamma}_l$. Therefore, it seems reasonable to conjecture that Maclaurin's inequality is essentially a class of inequalities concerning the Maclaurin means corresponding to the cones, and determined by the inclusion relations of cones. More generally, it is actually determined by the G$\mathring{\text{a}}$rding's cones and $k$-convex cones together with their own inclusion relations. Based on this consideration, for any $1\leq k\leq n$, we may naturally define the mean $\frac{\Pi_k^{1/C_n^k}}{k}$ as the $k$-th Maclaurin mean corresponding to the $k$-convex cone $\mathcal{P}_k$. For convenience of expression, for any $1\leq k\leq n$, we let
\begin{equation}
    M_k(x) := \frac{\Pi_k^\frac{1}{C_n^k}(x)}{k}, \quad x\in\mathcal{P}_k.
\end{equation}
Our conjecture can be stated precisely as follows.

\begin{Conjecture*}
Let $n\geq2$ and $C_1$, $C_2$ be any G$\mathring{\text{a}}$rding's cone or $k$-convex cone in $\mathbb{R}^n$, and let $Q_1$ and $Q_2$ denote the Maclaurin means corresponding to $C_1$ and $C_2$, respectively. If $C_1\subset C_2\ (C_2\subset C_1)$, then $Q_1(x)\leq(\geq)\, Q_2(x)$ on $\overline{C_1}\,(\overline{C_2})$. Otherwise, there is no inequality relation between $Q_1$ and $Q_2$ on $\overline{K^+}$.
\end{Conjecture*}

The above conjecture is the generalization of the classical Maclaurin's inequality via the inclusion relations of cones. In the current paper, our main purpose is to establish a Newton-type inequality for the $\Pi_k$ symmetric functions. Furthermore, we confirm the above conjecture by analyzing all possible cases. However, we do not know why the inclusion relations of the cones determines the inequality relation between the corresponding Maclaurin means. Our main results are as follows.
\begin{Theorem} [Newton-type inequality] \label{Th1.3}
Let $2\leq k\leq n-1$. For any $x=(x_1,...,x_n)\in\overline{\mathcal{P}}_{k-1}$, then the following inequality holds:
\begin{equation} \label{N2}
    M_k^2(x) \geq M_{k-1}(x) M_{k+1}(x),
\end{equation}
and the equality holds if and only if $x_1=\cdots=x_n>0$, or $x$ contains at least $k$ zeros and all the remaining coordinates are nonnegative.
\end{Theorem}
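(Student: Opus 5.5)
The plan is to pass to logarithms and reduce the statement to the case $n=k+1$ by a double-counting argument. It suffices to treat $x\in\mathcal{P}_{k-1}$, where every partial sum is positive. The boundary $\overline{\mathcal{P}}_{k-1}$ then follows by continuity, though the equality cases there need a separate look. For a subset $S\subset\{1,\dots,n\}$ write $s_S=\sum_{i\in S}x_i$. By definition,
\[
\log M_k(x)=\frac{1}{C_n^k}\sum_{|S|=k}\log s_S-\log k,
\]
so \eqref{N2} is equivalent to
\[
\frac{2}{C_n^k}\sum_{|S|=k}\log s_S-\frac{1}{C_n^{k-1}}\sum_{|T|=k-1}\log s_T-\frac{1}{C_n^{k+1}}\sum_{|U|=k+1}\log s_U\ \ge\ \log\frac{k^2}{k^2-1}.
\]

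The first step is to rewrite each of the three averages as an average over flags. Fix a $(k+1)$-set $U$ and look at its $(k-1)$- and $k$-subsets. Averaging first over $U$ and then over the subsets inside $U$ reproduces the uniform averages over all $k$-sets and over all $(k-1)$-sets, by symmetry. Hence the left-hand side equals the average, over all $(k+1)$-sets $U$, of the same quantity computed inside $U$ alone. That inner quantity is exactly the left-hand side of the inequality for the $k+1$ variables $(x_i)_{i\in U}$.

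These $k+1$ variables lie in the cone $\mathcal{P}_{k-1}$ of $\mathbb{R}^{k+1}$. So it suffices to prove the theorem when $n=k+1$, and the equality characterization for general $n$ will follow from requiring equality in every $U$.

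In the case $n=k+1$, let $s=\sum_{i=1}^{k+1}x_i$ and set $z_i=s-x_i$. These are the $k$-sums, and $\sum_i z_i=ks$. The $(k-1)$-sums are $s-x_i-x_j=z_i+z_j-s$, where $s=\frac1k\sum z_l$. The target inequality becomes
\[
\Big(\prod_{i} z_i\Big)^{\frac{2}{k+1}}\frac{1}{k^2}\ \ge\ \frac{1}{k-1}\Big(\prod_{i<j}(z_i+z_j-s)\Big)^{\frac{2}{k(k+1)}}\cdot\frac{s}{k+1},
\]
subject to $z_i+z_j\ge s$ for all $i<j$. Both sides are homogeneous of degree $2$, so I would normalize $s=k$, which makes $\sum z_i=k^2$ and puts the equality point at $z_i\equiv k$.

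To prove this I would view $F(z)=\frac{2}{k+1}\sum_i\log z_i-\frac{2}{k(k+1)}\sum_{i<j}\log(z_i+z_j-k)$ on the slice $\sum z_i=k^2$ and show $F$ is minimized at $z_i\equiv k$. The first approach I would try is a smoothing argument: replace two coordinates $z_1,z_2$ by their average, keeping the sum fixed, and check that $F$ does not increase. The terms $\log z_1+\log z_2$ decrease under averaging, since $\log$ is concave. The terms $\log(z_1+z_j-k)+\log(z_2+z_j-k)$ for $j\ge3$ also decrease, and they enter with a negative sign, which works against us. The term $\log(z_1+z_2-k)$ is unchanged. So the comparison amounts to an inequality between the second-order (concavity) defects of $\log z$ and of $\log(z+c)$ with $c=z_j-k$. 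This is precisely the step I expect to be the main obstacle, and it will need the constraint $z_i+z_j\ge k$.

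If smoothing fails, I would fall back on a Lagrange multiplier or Schur-type argument. Alternatively, I would use the concavity of $\log$ weighted by the explicit coefficients $\frac{2}{k+1}$ versus $\frac{2}{k(k+1)}\cdot(k-1)$, where each $z_i$ appears in $k$ of the pair terms. Finally, I would read off the equality cases. In the interior, equality forces all $x_i$ equal. On the boundary, both sides vanish exactly when some $(k-1)$-sum and some $k$-sum vanish. Under nonnegativity this means at least $k$ of the coordinates are zero, since otherwise some $k$-sum is positive in every $U$ and a strict inequality persists.
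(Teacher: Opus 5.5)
Your reduction to the case $n=k+1$ by averaging over $(k+1)$-subsets is exactly the paper's identity \eqref{local-global}. The substitution $z_i=s-x_i$ is also fine, though with $s=k$ the symmetric point is $z_i\equiv k^2/(k+1)$, not $z_i\equiv k$.

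The gap is that the case $n=k+1$ is the entire content of the theorem, and you do not prove it. Worse, the smoothing you propose fails as stated. First, the directions are reversed. By concavity, averaging $z_1,z_2$ \emph{increases} $\log z_1+\log z_2$, and this is the term working against you. It also increases each $\log(z_1+z_j-k)+\log(z_2+z_j-k)$, which helps you because of its minus sign. With $d=(z_1-z_2)/2$ and $c_j=z_j-k$, the step you need is
\[
\log\Bigl(1+\frac{d^2}{z_1z_2}\Bigr)\le\frac1k\sum_{j=3}^{k+1}\log\Bigl(1+\frac{d^2}{(z_1+c_j)(z_2+c_j)}\Bigr).
\]
This is false for an arbitrary pair. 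Take $k=2$ and $x=(1.1,0.8,0.1)$, and average the two largest coordinates to get $(0.95,0.95,0.1)$. This raises $2\log M_2-\log M_1-\log M_3$ from about $0.3085$ to about $0.3139$. So smoothing cannot be applied to an arbitrary pair; the choice of pair matters.

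Your fallbacks (Lagrange multipliers, a Schur-type argument) are not carried out. A Lagrange argument would also have to handle the non-compactness of the domain. Near its boundary $F$ need not blow up; for example, it stays bounded near points with $k$ zero coordinates.

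What is missing is the content of Lemmas \ref{Lemma2.1}--\ref{Lemma2.2}. The paper moves only the \emph{smallest} and \emph{largest} normalized coordinates $a<b$ toward each other, and only infinitesimally. Convexity of $h_{a,b}(t)=1/((1-a-t)(1-b-t))$ lets it replace the other $q-2$ coordinates by their mean $c$. It then uses the identity
\[
(q-2)(1-a)(1-b)-(q-1)(1-a-c)(1-b-c)=(q-1)(q-2)c^2+(c-a)(b-c)>0 .
\]
Positivity needs $c>0$ and $a\le c\le b$, which is exactly where the extremal choice of the pair enters. This gives $\partial_{x_i}\mathcal L_q<\partial_{x_j}\mathcal L_q$ at the indices of the minimum and maximum. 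Next, $\mathcal L_q$ has a minimizer on the permutohedron $\mathcal H(x^0)\subset\mathcal D_q$, which is compact, convex and permutation-invariant. By the derivative comparison this minimizer must be $x^q$, which sidesteps the boundary issue entirely.

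Finally, in the boundary equality case you assume nonnegativity instead of deriving it. Suppose $\sum_{i\in I}x_i=0$ with $|I|=k$ and $x\in\overline{\mathcal P}_{k-1}$. Then $-x_i=\sum_{j\in I\setminus\{i\}}x_j\ge0$ forces $x_i=0$ for every $i\in I$. For $i\notin I$, choose $S\subset I$ with $|S|=k-2$; then $x_i=x_i+\sum_{j\in S}x_j\ge0$.
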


The Newton-type inequality is essentially a class of log-concavity inequalities, which implies the concavity of $\log M_k(x)$ with respect to $k$. From this, we can easily obtain the following interpolation inequality.

\begin{Corollary} [Interpolation inequality] \label{Co1.1}
Let $1\le p<r<s\le n$. For any $x\in\overline{\mathcal{P}}_p$, we have
\begin{equation} \label{Interpolation}
 M_{r}(x) \ge M_{p}^{\frac{s-r}{s-p}}(x)\cdot M_{s}^{\frac{r-p}{s-p}}(x).
\end{equation}
\end{Corollary}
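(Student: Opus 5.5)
We derive Corollary \ref{Co1.1} from Theorem \ref{Th1.3} by the standard log-concavity argument. The main point is to put every mean in the chain on a common domain and to handle points where some means vanish.

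\textbf{Domains.} Take $x\in\overline{\mathcal{P}}_p$. Since $\mathcal{P}_p\subset\mathcal{P}_k$ for every $k\ge p$, we have $x\in\overline{\mathcal{P}}_k$, so every subset sum of size $k$ is nonnegative. Hence $\Pi_k(x)\ge 0$ and $M_k(x)\ge 0$ are well defined for all $p\le k\le n$. For each $k$ with $p+1\le k\le n-1$ we have $x\in\overline{\mathcal{P}}_{k-1}$, so Theorem \ref{Th1.3} applies and gives
\[
M_k^2(x)\ge M_{k-1}(x)M_{k+1}(x).
\]

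\textbf{Degenerate case.} Suppose some $M_k(x)=0$ with $p\le k\le n$. Then some $k$-subset sum vanishes. For every $k'>k$, each $k'$-subset sum can be written as a $k$-subset sum plus sums of single coordinates. It is therefore natural to show that $M_k(x)=0$ for some $k\in[p,s]$ forces $M_p(x)M_s(x)=0$, and then the right-hand side of \eqref{Interpolation} is zero. The cleanest route avoids this bookkeeping altogether: first prove the inequality on the open cone $\mathcal{P}_p$, where all sums are positive, and then pass to the closure by continuity. Both sides of \eqref{Interpolation} are continuous on $\overline{\mathcal{P}}_p$, because $M_k^{\,\theta}$ with $\theta\ge0$ is continuous on nonnegative values. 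This density argument is the step needing care, and it settles the degenerate case.

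\textbf{Positive case.} On $\mathcal{P}_p$ all $M_k>0$ for $k\ge p$. Set $a_k=\log M_k(x)$. The Newton-type inequality says $a_k-a_{k-1}\ge a_{k+1}-a_k$ for $p<k<n$, so the increments $d_k=a_k-a_{k-1}$ are nonincreasing in $k$ on $p+1\le k\le n$. Then
\[
\frac{a_r-a_p}{r-p}=\frac{1}{r-p}\sum_{k=p+1}^{r}d_k\ \ge\ d_r\ \ge\ d_{r+1}\ \ge\ \frac{1}{s-r}\sum_{k=r+1}^{s}d_k=\frac{a_s-a_r}{s-r},
\]
since each average of a nonincreasing sequence over an earlier block dominates one over a later block. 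Rearranging gives $(s-p)a_r\ge(s-r)a_p+(r-p)a_s$. Exponentiating yields \eqref{Interpolation} on $\mathcal{P}_p$, and the continuity argument above extends it to $\overline{\mathcal{P}}_p$.
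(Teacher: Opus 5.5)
Your proof is correct and follows essentially the same route as the paper. You prove the slope inequality for $\log M_k$ on the open cone $\mathcal{P}_p$ via the Newton-type inequality of Theorem \ref{Th1.3}, exponentiate, and extend to $\overline{\mathcal{P}}_p$ by continuity; your explicit argument with nonincreasing increments simply spells out the ``basic property of concave functions'' that the paper invokes.
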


\begin{remark}
    In fact, the above interpolation inequality \eqref{Interpolation} is a generalization of an inequality in \cite{ito2008mixed}, where the case $p=1$ and $s=n$ was proved.
\end{remark}

As mentioned earlier, the classical Maclaurin's inequality \eqref{M1} follows easily from the Newton's inequality \eqref{N1}. Similarly, as a direct consequence of Newton-type inequality \eqref{N2}, we can also obtain the following Maclaurin-type inequality.

\begin{Corollary} \label{Co1.2}
    Let $1\leq j< l\leq n$. For any $x=(x_1,...,x_n)\in\overline{\mathcal{P}}_j$, then the following inequality holds:
\begin{equation} \label{M2}
    M_l(x) \geq M_j(x),
\end{equation}
    and the equality holds if and only if $x_1 = \cdots = x_n>0$, or $x$ contains at least $l$ zeros and all the remaining coordinates are nonnegative.
\end{Corollary}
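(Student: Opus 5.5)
The plan is to deduce the corollary from Theorem \ref{Th1.3}, following the classical passage from Newton to Maclaurin. Since there is no $M_0$ to anchor the chain at the bottom, I will anchor it at the top with the elementary comparison $M_n\ge M_{n-1}$. I first prove the strict-interior case $x\in\mathcal{P}_j$, then pass to $\overline{\mathcal{P}}_j$ by continuity, and finally analyse equality.

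\emph{Step 1 (top comparison).} For $x\in\overline{\mathcal{P}}_{n-1}$, write $S=\sigma_1(x)$. Each $(n-1)$-sum is $S-x_i\ge 0$. Hence, by AM--GM,
\[
M_{n-1}(x)=\frac{1}{n-1}\Big(\prod_{i=1}^n (S-x_i)\Big)^{1/n}\le \frac{1}{n-1}\cdot\frac{1}{n}\sum_{i=1}^n (S-x_i)=\frac{S}{n}=M_n(x).
\]
Equality holds iff all $S-x_i$ are equal, i.e.\ $x_1=\cdots=x_n$.

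\emph{Step 2 (ratio chain).} Let $x\in\mathcal{P}_j$. Since $\mathcal{P}_j\subset\mathcal{P}_k$ for $k\ge j$, every $k$-sum with $k\ge j$ is positive, so $M_k(x)>0$ for all $j\le k\le n$. For $j+1\le k\le n-1$ we have $x\in\mathcal{P}_{k-1}$, so Theorem \ref{Th1.3} applies. Dividing \eqref{N2} by $M_kM_{k-1}>0$ gives $M_k/M_{k-1}\ge M_{k+1}/M_k$. Iterating this and then using Step 1, for every $j\le k\le n-1$,
\[
\frac{M_{k+1}}{M_k}\ge \frac{M_{k+2}}{M_{k+1}}\ge\cdots\ge\frac{M_n}{M_{n-1}}\ge 1 .
\]
Multiplying these inequalities for $k=j,\dots,l-1$ yields $M_l(x)\ge M_j(x)$ on $\mathcal{P}_j$. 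Each $M_k$ is continuous on $\overline{\mathcal{P}}_k\supset\overline{\mathcal{P}}_j$, so the inequality extends to $\overline{\mathcal{P}}_j$.

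\emph{Step 3 (equality).} This is the step needing the most care.

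If $M_j(x)>0$, then no $j$-sum vanishes, so $x\in\mathcal{P}_j$ and Step 2 applies. Equality then forces every ratio in the chain to equal $1$, in particular $M_n=M_{n-1}$. By Step 1 this gives $x_1=\cdots=x_n$, and positivity follows from $x\in\mathcal{P}_j$.

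If $M_j(x)=0$, equality means $M_l(x)=0$, i.e.\ some $l$-subset $I$ has $\sum_{i\in I}x_i=0$. I then show $x$ has at least $l$ zeros and the remaining coordinates are nonnegative:
\begin{itemize}
\item Every $j$-subset of $I$ has nonnegative sum. Averaging over these subsets gives a positive multiple of $\sum_I x_i=0$, so all these $j$-sums vanish.
\item Since $j<l$, exchanging one element of a $j$-subset of $I$ for another element of $I$ shows the $x_i$, $i\in I$, are all equal. As their sum is $0$, they are all $0$.
\item For $i\notin I$, take $j-1$ zeros from $I$ together with $i$. This $j$-sum is $x_i$, so $x_i\ge0$.
\end{itemize}

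Conversely, both kinds of points in the statement give equality. If $x_1=\cdots=x_n=c>0$, then $M_k(x)=c$ for every $k$. If $x$ has at least $l$ zeros and the rest is nonnegative, then some $l$-sum and some $j$-sum vanish, so $M_l(x)=M_j(x)=0$.
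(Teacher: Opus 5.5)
Your proposal is correct and follows the paper's route. The paper also anchors at the top with the AM--GM comparison $M_n\ge M_{n-1}$ and propagates downward through Theorem~\ref{Th1.3}, phrased as backward induction $M_k^2\ge M_{k-1}M_{k+1}\ge M_{k-1}M_k$ where you use the equivalent ratio chain, and it splits the equality analysis into interior and boundary cases as you do. Your equality argument is somewhat more self-contained: you use the AM--GM equality case and work out the boundary case explicitly, where the paper cites the Newton equality condition and refers back to the end of the proof of Theorem~\ref{Th1.3}.
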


Corollary \ref{Co1.2} is the Maclaurin-type inequality concerning the Maclaurin means corresponding to the inclusion relations $\mathcal{P}_j\subset\mathcal{P}_l$. In \cite{favaro2018general}, the authors proved a general Maclaurin's inequality with weights by using inequalities between Jacobi polynomials, their result is a weighted generalization of the classical Maclaurin's inequality. Soon after, in \cite{favaro2018addendum}, they further presented a very short proof of this general inequality. The simple proof is based on a combinatorial observation and the use of the classical Maclaurin's inequality. Since we have derived inequality \eqref{M2}, the similar idea is applicable. Thus, based on Corollary \ref{Co1.2}, we can easily obtain the following weighted inequality.

\begin{Corollary} \label{Co1.3}
    Let $n\geq2$ and $(w_1,...,w_n)$ be a sequence of positive real numbers satisfying $\sum_{i=1}^n w_i=1$. If $r=(r_1,...,r_n)$ such that for every $1\leq i\leq n$, $r_i = mw_i$ being an integer for some $m\in\mathbb{N}_+$, then for $1\leq j<l\leq m$ and any $x=(x_1,...,x_n)\in\overline{K^+}$, i.e., $x_i\geq0$, the following weighted Maclaurin-type inequality holds:
   \begin{equation} \label{wighted M2}
     \frac{P_l^{\frac{1}{C_m^l}}(x,r)}{l} \geq \frac{P_j^{\frac{1}{C_m^j}}(x,r)}{j},
   \end{equation}
   where 
   $$ 
   P_l(x,r):= \prod_{(l_1,...,l_n)\in\mathcal{S}_{n,l}} \left( \sum_{i=1}^n l_ix_i \right)^{\prod_{i=1}^n C_{r_i}^{l_i}} ,
   $$
   with $\mathcal{S}_{n,l}:=\{ (l_1,...,l_n) \in\mathbb{N}^n: \ \sum_{i=1}^n l_i=l \}$.
   Furthermore, the equality in \eqref{wighted M2} holds if and only if $x_1 = \cdots = x_n>0$ or \ $\sum_{i=1}^n r_i\text{\large$\chi$}_{\{0\}}(x_i)\geq l$, where $\text{\large$\chi$}_{\{0\}}$ denotes the characteristic function relative to set $\{0\}$.
\end{Corollary}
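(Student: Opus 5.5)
Following the combinatorial trick of \cite{favaro2018addendum}, we reduce \eqref{wighted M2} to Corollary \ref{Co1.2} applied in dimension $m$. Given $x\in\overline{K^+}$ and the integer multiplicities $r=(r_1,\dots,r_n)$ with $\sum_i r_i=m$, consider the vector
\[
y=(\underbrace{x_1,\dots,x_1}_{r_1},\underbrace{x_2,\dots,x_2}_{r_2},\dots,\underbrace{x_n,\dots,x_n}_{r_n})\in\mathbb{R}^m,
\]
which lies in $\overline{K^+}\subset\overline{\mathcal{P}}_j$ (in $\mathbb{R}^m$) for every $j$. The key identity is $\Pi_l(y)=P_l(x,r)$ for $1\le l\le m$, where $\Pi_l$ is now computed in $m$ variables. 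To prove it, group the $l$-subsets $I$ of the index set of $y$ according to how many indices they take from each block. If $I$ takes $l_i$ indices from block $i$, then $\sum_i l_i=l$, so $(l_1,\dots,l_n)\in\mathcal{S}_{n,l}$, and the corresponding sum of coordinates equals $\sum_i l_ix_i$. The number of such subsets is $\prod_i C_{r_i}^{l_i}$, with the convention $C_{r_i}^{l_i}=0$ when $l_i>r_i$, so those tuples contribute factor $1$. Consequently $P_l(x,r)^{1/C_m^l}/l=M_l(y)$, where $M_l$ is the Maclaurin mean in $\mathbb{R}^m$. As a sanity check, Vandermonde's identity gives $\sum_{\mathcal{S}_{n,l}}\prod_i C_{r_i}^{l_i}=C_m^l$, which is consistent.

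Applying Corollary \ref{Co1.2} in $\mathbb{R}^m$ with $1\le j<l\le m$ gives $M_l(y)\ge M_j(y)$, which is exactly \eqref{wighted M2}. If $m=1$ there is nothing to prove, since no pair $j<l\le m$ exists. Otherwise $m\ge2$, and the hypotheses of the corollary are met.

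For the equality case, Corollary \ref{Co1.2} says equality holds iff either all $y$ coordinates are equal and positive, or $y$ has at least $l$ zeros with the remaining coordinates nonnegative. Since every $r_i=mw_i>0$, each $x_i$ actually appears in $y$. Hence the first alternative is equivalent to $x_1=\cdots=x_n>0$. The number of zeros of $y$ is $\sum_i r_i\chi_{\{0\}}(x_i)$, and nonnegativity is automatic because $x\in\overline{K^+}$. So the second alternative is equivalent to $\sum_i r_i\chi_{\{0\}}(x_i)\ge l$.

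The only genuinely delicate point is the counting identity $\Pi_l(y)=P_l(x,r)$. In particular, one must check carefully that tuples with some $l_i>r_i$ contribute nothing, and that the exponents combine so that the normalization $C_m^l$ matches. Everything else is a direct transfer from Corollary \ref{Co1.2}.
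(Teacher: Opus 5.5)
Your proposal is correct and matches the paper's proof: you replicate each $x_i$ exactly $r_i$ times to get $y\in\overline{K^+}\subset\mathbb{R}^m$, use the identity $\Pi_l(y)=P_l(x,r)$, apply Corollary~\ref{Co1.2} in $\mathbb{R}^m$, and transfer the equality cases using $r_i\ge1$. Your counting argument for the identity is more explicit than the paper's (strictly, a tuple with some $l_i>r_i$ contributes the factor $1$ only under the convention $0^0=1$ when $\sum_i l_ix_i=0$), and since $m=\sum_i r_i\ge n\ge2$, the case $m=1$ never arises.
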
 

In view of the cone inclusion relation $\Gamma_{n-k+1}\subset\mathcal{P}_k$, $1\leq k\leq n$, we prove the following inequality between corresponding Maclaurin means $E_{n-k+1}^\frac{1}{n-k+1}$ and $M_k$.

\begin{Theorem} \label{Th1.4}
    Let $n\geq3$ and $1\leq k\leq n$, then for any $x\in\overline{\Gamma}_{n-k+1}$, the following inequality holds:
    \begin{equation} \label{M3}
        M_k(x) \geq E_{n-k+1}^\frac{1}{n-k+1}(x),
    \end{equation}
    when $2\leq k\leq n-1$, the equality holds if and only if $x_1=\cdots=x_n>0$, or $x$ contains at least $k$ zeros and all the remaining coordinates are nonnegative.
\end{Theorem}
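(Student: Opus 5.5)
The plan is to reduce \eqref{M3} to an extremal problem for a degree-zero homogeneous quantity and then exploit concavity of $E_m^{1/m}$ on the G\aa rding cone; here and below $m:=n-k+1$. The two endpoint cases are immediate. For $k=1$ we have $m=n$ and $M_1=\Pi_1^{1/n}=\sigma_n^{1/n}=E_n^{1/n}$, so equality holds. For $k=n$ we have $m=1$ and $M_n=\sigma_1/n=E_1$, again with equality. So assume $2\le k\le n-1$. Since $\Gamma_m\subset\mathcal P_k$ by \cite{li2020inclusion}, every partial sum $S_I:=\sum_{i\in I}x_i$ with $|I|=k$ is positive on $\Gamma_m$, and $\log M_k=\frac{1}{C_n^k}\sum_{|I|=k}\log(S_I/k)$ is a smooth concave symmetric function there. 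It therefore suffices to show that
\[
F(x):=\log M_k(x)-\tfrac1m\log E_m(x)
\]
is nonnegative on $\Gamma_m$, and then pass to $\overline{\Gamma}_m$ by continuity. Note that $F$ is homogeneous of degree $0$.

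The key step is an averaging/symmetrization argument. Both $\log M_k$ and $\frac1m\log E_m$ are symmetric and concave on $\Gamma_m$; the latter by G\aa rding's theory of hyperbolic polynomials. Concavity alone does not order them, so I would instead normalize $\sigma_1(x)=n$ and study the minimum of $F$ on the compact slice $\overline{\Gamma}_m\cap\{\sigma_1=n\}$. I would try to show that at an interior critical point the Lagrange condition
\[
\frac{1}{C_n^k}\sum_{I\ni i}\frac1{S_I}=\frac{1}{m}\,\frac{\sigma_{m-1}(x|i)}{\sigma_m(x)}+\lambda\qquad(1\le i\le n)
\]
forces all $x_i$ to be equal. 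The tool would be to subtract the equations for $i$ and $j$ and factor out $(x_i-x_j)$, in the standard way for Newton-type proofs. On the left this uses $\frac1{S_{I\cup i}}-\frac1{S_{I\cup j}}=\frac{x_j-x_i}{S_{I\cup i}S_{I\cup j}}$. On the right it uses $\sigma_{m-1}(x|i)-\sigma_{m-1}(x|j)=(x_j-x_i)\sigma_{m-2}(x|ij)$. One then has to check that the resulting cofactors have incompatible signs unless $x_i=x_j$, using $\sigma_{m-2}(x|ij)>0$ on $\Gamma_m$. At the equal point $F=0$.

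Next come the boundary points of the slice, where $\sigma_m=0$ or some $S_I=0$. Where $\sigma_m(x)=0$ but all $S_I>0$, the right-hand side of \eqref{M3} vanishes and the left is positive. Where some $S_I=0$, I would show that $\sigma_m(x)=0$ as well, i.e.\ that $\overline{\Gamma}_m\cap\partial\mathcal P_k\subset\{\sigma_m=0\}$. This follows by examining the sharpness of the inclusion $\Gamma_m\subset\mathcal P_k$: with $x$ sorted decreasingly, $S_I=0$ for the $k$ smallest coordinates forces, on $\overline{\Gamma}_m$, those coordinates to be zero, hence at least $k$ zeros and $\sigma_m=0$. The same analysis gives the equality characterization: equality either comes from the interior critical point $x_1=\cdots=x_n>0$, or requires $M_k=0$, which on $\overline{\Gamma}_m$ means at least $k$ coordinates vanish with the rest nonnegative.

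The main obstacle is the interior critical-point step. The sign analysis of the two cofactors is not obviously conclusive, because the left-hand cofactor involves $\sum 1/(S_{I\cup i}S_{I\cup j})$ and the right involves $\sigma_{m-2}(x|ij)/\sigma_m$, and these must be compared rather than merely sign-checked. If this direct comparison fails, my fallback is an induction on $n$. I would fix the smallest coordinate $x_n$ and use the expansion $\sigma_m(x)=\sigma_m(x')+x_n\sigma_{m-1}(x')$ together with the splitting of $\Pi_k(x)$ into $\Pi_k(x')$ and $\Pi_{k-1}(x'+\cdot)$ shifted by $x_n$. The inductive hypothesis would be applied to both pieces with $(n-1,k)$ and $(n-1,k-1)$, which correspond to the same index $m$ in dimension $n-1$, or to $m-1$. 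The pieces would then be recombined by Theorem \ref{Th1.3} and H\"older's inequality.
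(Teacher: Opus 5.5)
The proposal does not yet prove \eqref{M3}. The step that carries all the content is left open, and it does not close the way you suggest. Subtracting the Lagrange equations for $i$ and $j$ gives
\[
(x_j-x_i)\Bigl[\frac{1}{C_n^k}\sum_{J}\frac{1}{(S_J+x_i)(S_J+x_j)}-\frac{\sigma_{m-2}(x|ij)}{m\,\sigma_m(x)}\Bigr]=0,
\]
where $J$ ranges over the $(k-1)$-subsets of $\{1,\dots,n\}\setminus\{i,j\}$. Both terms in the bracket are positive on $\Gamma_m$, using $\Gamma_m\subset\mathcal P_k$ and Lemma \ref{Lemma3}. So there is no sign incompatibility, and the equation just says the bracket vanishes. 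To rule this out off the diagonal, at least at a minimizer, you need a quantitative comparison essentially as hard as the theorem itself, and nothing in the proposal supplies it.

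The compactness step is also incomplete. $F$ is not continuous on the slice: at points with $k$ zeros both means vanish and $F$ has the form $\infty-\infty$. Showing that both sides vanish there does not give $\liminf F\ge 0$ as you approach those points, and that is what you need to place the minimum in the interior. Your equality characterization rests on these same two unproved steps.

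The fallback is not yet an argument either. Deleting only $x_n$ gives $M_k(x)=M_k(x')^{\frac{n-k}{n}}\bigl(\frac{k-1}{k}M_{k-1}(y)\bigr)^{\frac{k}{n}}$ with $y=x'+\frac{x_n}{k-1}\mathbf{1}$. Using the hypothesis at $(n-1,k-1)$ requires $y\in\overline{\Gamma}_m$ in $\mathbb{R}^{n-1}$, which you have not checked when $x_n<0$. After both hypotheses are applied, you are left needing
\[
E_{m-1}(x')^{m}\Bigl(\frac{k-1}{k}\Bigr)^{km}E_m(y)^{k}\ge E_m(x)^{n}.
\]
This is a pure statement about elementary symmetric functions, and neither Theorem \ref{Th1.3} (which concerns the $M_r$) nor H\"older obviously yields it.

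The idea you are missing, which the paper uses, is to delete every coordinate and average, rather than delete one. For $k\le n-1$ there is the exact identity $M_k(x)=\bigl(\prod_{i=1}^n M_k(\hat{x}^i)\bigr)^{1/n}$, which keeps $k$ fixed and lowers $m$ by one with no shifted term. Lemma \ref{Lemma3} gives $\hat{x}^i\in\Gamma_{m-1}$, so the induction hypothesis applies to every $\hat{x}^i$. The recombination on the elementary-symmetric side is Lemma \ref{Lemma4},
\[
\Bigl(\prod_{i=1}^n E_{m-1}^{\frac{1}{m-1}}(\hat{x}^i)\Bigr)^{1/n}\ge E_m^{\frac1m}(x).
\]
It follows from the tangent-plane inequality for the concave function $E_m^{1/m}$ at $x$, tested at $y_i=1/E_{m-1}(\hat{x}^i)$, combined with AM--GM. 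This avoids critical-point analysis entirely, and the interior equality case comes directly from that of Lemma \ref{Lemma4}.
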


Combining the classical Maclaurin's inequality and Theorem \ref{Th1.4}, we immediately obtain the following corollary.

\begin{Corollary} \label{Co1.4}
    Let $n\geq3$, $1\leq k,l\leq n$ such that $k+l\geq n+1$, then for any $x\in \overline{\Gamma}_{l}$, the following inequality holds:
    \begin{equation} \label{M3 kl}
        M_k(x) \geq E_{l}^\frac{1}{l}(x),
    \end{equation}
    when $2\leq k\leq n-1$, the equality holds if and only if $x_1=\cdots=x_n>0$, or $x$ contains at least $k$ zeros and all the remaining coordinates are nonnegative.
\end{Corollary}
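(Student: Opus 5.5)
Corollary \ref{Co1.4} follows by chaining Theorem \ref{Th1.4} with the classical Maclaurin inequality \eqref{M1}. Fix $1\le k,l\le n$ with $k+l\ge n+1$ and set $m:=n-k+1$. The hypothesis gives $m\le l$, hence $\overline{\Gamma}_l\subset\overline{\Gamma}_m$. So for $x\in\overline{\Gamma}_l$, Theorem \ref{Th1.4} applies and yields
\[
M_k(x)\ge E_m^{\frac1m}(x).
\]
If $m=l$ we are done. If $m<l$, Maclaurin's inequality \eqref{M1} with $j=m$ on $\overline{\Gamma}_l$ gives $E_m^{1/m}(x)\ge E_l^{1/l}(x)$. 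Combining the two inequalities proves \eqref{M3 kl}.

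For the equality case, let $2\le k\le n-1$. If equality holds in \eqref{M3 kl}, then both links of the chain are equalities, in particular $M_k(x)=E_m^{1/m}(x)$. By the equality clause of Theorem \ref{Th1.4}, either $x_1=\cdots=x_n>0$, or $x$ has at least $k$ zeros and its remaining coordinates are nonnegative.

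Conversely, suppose first that $x_1=\cdots=x_n=c>0$. Then $M_k(x)=\frac{kc}{k}=c$ and $E_l^{1/l}(x)=c$, so equality holds. Suppose instead that $x$ has at least $k$ zeros and the rest are nonnegative. Then $x$ has at most $n-k<l$ nonzero coordinates, since $l\ge n-k+1$. Hence $\sigma_l(x)=0$ and the right-hand side of \eqref{M3 kl} vanishes. On the left, any $k$-subset consisting of zero coordinates gives a zero factor in $\Pi_k$, so $M_k(x)=0$ and equality holds.

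The only point needing care is the range of indices. One must check that $m=n-k+1$ satisfies $1\le m\le n$ for every $k$, so that Theorem \ref{Th1.4} applies. One must also treat the degenerate case $m=l$ separately, since \eqref{M1} requires $j<l$. Beyond these checks the argument is routine.
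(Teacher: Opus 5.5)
Your proposal is correct and is exactly the argument the paper intends; the paper omits the proof as an immediate combination of Theorem \ref{Th1.4} and the classical Maclaurin inequality \eqref{M1}. Since $k+l\ge n+1$ gives $n-k+1\le l$, you chain $M_k\ge E_{n-k+1}^{1/(n-k+1)}\ge E_l^{1/l}$ on $\overline{\Gamma}_l$, and your equality analysis is sound: forcing equality in the first link and invoking the equality clause of Theorem \ref{Th1.4}, together with the direct verification of the converse.
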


\begin{remark} \label{remark 1.2}
    Clearly, under the condition of Corollary \ref{Co1.4}, we also have $\Gamma_l\subset\mathcal{P}_k$ in $\mathbb{R}^n$.
\end{remark}

\begin{remark}
        In fact, the above inequality \eqref{M3 kl} is a conjecture proposed in \cite{carlson1971mixed}, where the authors conjectured that it holds on $\overline{K^+}$, based on an observation different from ours. Later, this conjecture was proved in \cite{ito2008mixed}. Although only on $\overline{K^+}$, the proof in \cite{ito2008mixed} was already quite complicated. In this paper, we prove that this inequality holds on $\overline{\Gamma}_{l}$, not merely on $\overline{K^+}$, based on a simpler proof. 
\end{remark}

In addition, we also prove the following inequality between two Maclaurin means $E_k^\frac{1}{k}$ and $M_r$, for any $n\geq k r$ with $1<k,r<n$. This result is motivated by the cone inclusion relation $\mathcal{P}_2\subset\Gamma_2$ in $\mathbb{R}^4$, which was proved in \cite{li2020inclusion}.

\begin{Theorem} \label{Th1.5}
    Let $n\geq kr$ with $1<k,r<n$. Then for any $x=(x_1,...,x_n)\in \overline{\mathcal{P}}_r$, the following inequality holds:
    \begin{equation} \label{M4}
        E_k^{\frac{1}{k}}(x) \geq M_r(x).
    \end{equation}
    If $n=kr$, then equality holds if and only if at least $n-1$ coordinates of $x$ are equal, or $x$ contains at least $n-k+1$ zeros and all the remaining coordinates are nonnegative. If $n>kr$, then equality holds if and only if $x_1=\cdots=x_n>0$, or $x$ contains at least $n-k+1$ zeros and all the remaining coordinates are nonnegative. 
\end{Theorem}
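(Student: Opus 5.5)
The plan is to reduce Theorem \ref{Th1.5} to the classical Maclaurin inequality and one AM--GM step, using a partition of the indices into blocks of size $r$. First suppose $n=kr$. Split $\{1,\dots,n\}$ into $k$ disjoint blocks $B_1,\dots,B_k$, each of size $r$, and write $y_t=\sum_{i\in B_t}x_i$. Since $x\in\overline{\mathcal P}_r$, every $y_t\ge0$.

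Expanding $y_1y_2\cdots y_k$ gives a sum of monomials $x_{i_1}\cdots x_{i_k}$ with one index from each block. We then average over all ordered partitions, i.e. over all permutations $\pi\in S_n$, setting $B_t=\pi(\{(t-1)r+1,\dots,tr\})$. By symmetry every $k$-subset of indices appears equally often in this average, so
\[
\frac{1}{n!}\sum_{\pi}\prod_{t=1}^k y_t^{\pi}=\frac{r^k}{C_n^k}\,\sigma_k(x)=r^kE_k(x).
\]
The constant follows by counting: there are $r^k$ monomials per partition, and each monomial is a product of $k$ distinct variables, so the count matches exactly. On the other hand, by AM--GM over the permutations,
\[
\frac{1}{n!}\sum_\pi\prod_t y_t^\pi\ \ge\ \Big(\prod_\pi\prod_t y^\pi_t\Big)^{1/(k\cdot n!)\cdot k}.
\]
Each $r$-subset sum $x_{i_1}+\dots+x_{i_r}$ appears as some $y_t^\pi$ the same number of times, namely $k\,n!/C_n^r$. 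Hence the geometric mean equals $\Pi_r(x)^{k/C_n^r}$, and we obtain $r^kE_k\ge \Pi_r^{k/C_n^r}=(rM_r)^k$, i.e. $E_k^{1/k}\ge M_r$. This requires $E_k\ge0$, which is automatic since the left side is an average of nonnegative products.

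For $n>kr$, we take $k'=\lfloor n/r\rfloor\ge k$. The argument above, done with $k$ blocks of size $r$ and the remaining $n-kr$ indices left unused, still works: averaging $\prod_{t\le k}y_t^\pi$ over $S_n$ again yields $r^kE_k(x)$ by symmetry, and each $r$-subset still occurs equally often among the $k$ chosen blocks. So the same inequality holds directly, and Maclaurin is not even needed.

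The equality analysis is where I expect the real work.
\begin{itemize}
\item \emph{Equality in AM--GM.} This forces all products $\prod_t y_t^\pi$ to be equal, or the relevant quantities to vanish.
\item \emph{Case $n=kr$, positive products.} The products being constant means that for any two blocks, swapping one element between them preserves the product. Choosing pairs, this should force $x_i=x_j$ whenever both lie in a common configuration. The stated equality case ``at least $n-1$ coordinates equal'' suggests that for $n=kr$ the product is invariant under certain nonconstant configurations. I would verify this directly (e.g. $y_t$ all equal to $(\sum x)/k$ fails generally, so the check must be done carefully) and derive necessity by comparing two partitions that differ by a transposition. Those two products differ by a factor of $(y_1+x_j-x_i)(y_2+x_i-x_j)$ versus $y_1y_2$, so equality forces $(x_j-x_i)(y_2-y_1+x_i-x_j)=0$ for all admissible configurations.
\item \emph{Case $n>kr$.} The unused indices can also be swapped in, and this forces all $x_i$ equal.
\item \emph{Zero case.} Both sides vanish exactly when $\sigma_k=0$ and some $r$-sum vanishes. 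Using the nonnegativity of the products, this reduces to $x$ having at least $n-k+1$ zeros with the rest nonnegative.
\end{itemize}

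The main obstacle is this transposition bookkeeping in the equality case.
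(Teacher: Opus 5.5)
Your proof of the inequality is correct. For $n=kr$ it is the paper's argument: the paper averages over the $\prod_{i=0}^{k-1}C_{(k-i)r}^r$ ordered families of $k$ disjoint $r$-tuples rather than over $S_n$, which only changes a uniform multiplicity. For $n>kr$, your single average over $S_n$ with $n-kr$ indices left unused is more direct than the paper's route. The paper applies the case $n=kr$ to every $kr$-subset and then uses a second AM--GM across subsets. Both routes give the same equality criterion: equality holds iff all products $\prod_{t=1}^k s_{I_t}(x)$, with $s_I=\sum_{i\in I}x_i$, coincide over families of $k$ pairwise disjoint $r$-tuples. Your swap with an unused index correctly settles the interior case $x\in\mathcal P_r$, $n>kr$. (Maclaurin and $k'=\lfloor n/r\rfloor$ play no role.) The gap is in the rest of the equality analysis, and part of your sketch there is wrong.

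\textbf{The zero case.} Your claim that it reduces to ``at least $n-k+1$ zeros'' fails for $n=kr$. Take $x=(-(r-1),1,\dots,1)\in\overline{\mathcal P}_r$. Every $r$-tuple containing index $1$ sums to $0$, and every family of $k$ disjoint $r$-tuples is a partition of $\{1,\dots,kr\}$. So all products vanish and equality holds; indeed $\sigma_k(x)=C_{n-1}^{k-1}\bigl(\tfrac{n-k}{k}-(r-1)\bigr)=0$. Yet $x$ has no zero coordinate.

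Nonnegativity of the products cannot classify this case; you need the structure of the zero-sum $r$-tuples. For $x\in\partial\mathcal P_r$, sort $x_1\le\cdots\le x_n$, so that $x_1+\cdots+x_r=0$. An $r$-tuple then has zero sum iff it contains every index $i$ with $x_i<x_r$ and its remaining indices satisfy $x_i=x_r$. From this:
\begin{itemize}
\item If $x\ge0$, equality holds iff at most $k-1$ coordinates are positive.
\item If some $x_i<0$ and $n>kr$, take $k$ disjoint $r$-tuples avoiding one index with $x_i<x_r$. Their product is positive, so the inequality is strict.
\item If some $x_i<0$ and $n=kr$, you must show that exactly one index lies below $x_r$ and none lies above. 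This forces $x=c(-(r-1),1,\dots,1)$.
\end{itemize}

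\textbf{The interior case for $n=kr$.} You stop at the transposition identity. After cancelling the other (positive) blocks it reads $(x_i-x_j)(s_I-s_J)=0$ for disjoint $(r-1)$-tuples $I,J$ avoiding $i,j$. This does not force $x_i=x_j$. The missing steps are:
\begin{itemize}
\item If $a=x_i\ne x_j=b$, then $s_I=s_J$ for all such $I,J$.
\item Comparing $(r-1)$-tuples that differ in one index, all coordinates other than $x_i,x_j$ then equal some $c>0$.
\item Compare a partition with $i,j$ in the same block against one with $i,j$ in different blocks. This gives $(rc)^{k-2}(a+(r-1)c)(b+(r-1)c)=(rc)^{k-1}(a+b+(r-2)c)$, i.e.\ $(a-c)(b-c)=0$.
\end{itemize}
This is what produces the ``at least $n-1$ equal coordinates'' case. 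Sufficiency is the check that every partition of $\{1,\dots,kr\}$ into $r$-blocks then has product $(a+(r-1)c)(rc)^{k-1}$.
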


From the proof of Theorem \ref{Th1.5}, we can also obtain the corresponding inclusion relation of cones $\Gamma_k$ and $\mathcal{P}_r$.

\begin{Corollary} \label{Co1.5}
    Let $n\geq kr$ with $1<k,r<n$. Then $\mathcal{P}_r \subset \Gamma_k$ in $\mathbb{R}^n$.
\end{Corollary}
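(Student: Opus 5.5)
The plan is to read off Corollary \ref{Co1.5} from Theorem \ref{Th1.5}, applied to every index $j\le k$ at once. The hypothesis $n\ge kr$ gives $n\ge jr$ for every $2\le j\le k$, so Theorem \ref{Th1.5} applies with $k$ replaced by $j$. For $j=1$ the condition $\sigma_1>0$ on $\mathcal{P}_r$ holds trivially, since $\mathcal{P}_r\subset\mathcal{P}_n=\Gamma_1$.

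The one subtlety is that the left side $E_j^{1/j}$ of \eqref{M4} only makes sense a priori where $E_j\ge 0$. So I will not use \eqref{M4} pointwise on all of $\mathcal{P}_r$. Instead I will combine it with a connectedness argument.

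Fix $x\in\mathcal{P}_r$ and consider the segment $x(t)=(1-t)(1,\dots,1)+tx$ for $t\in[0,1]$. Since $\mathcal{P}_r$ is a convex open cone containing $(1,\dots,1)$, the whole segment lies in $\mathcal{P}_r$. Every $r$-sum is strictly positive at each point of the segment, so $\Pi_r(x(t))>0$. Because the segment is compact and $M_r$ is continuous, there is a constant $c>0$ with $M_r(x(t))\ge c$ for all $t\in[0,1]$.

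Now suppose $x\notin\Gamma_k$. Let $t^*\in(0,1]$ be the first time at which some $\sigma_i(x(t))$ with $i\le k$ vanishes; this exists since all $\sigma_i(x(0))>0$ and the $\sigma_i$ are continuous. For $t<t^*$ we have $x(t)\in\Gamma_k\cap\mathcal{P}_r$. In particular $E_i(x(t))>0$, so Theorem \ref{Th1.5} (with index $i$) gives $E_i^{1/i}(x(t))\ge M_r(x(t))\ge c$. Letting $t\to t^*$ yields $E_i(x(t^*))\ge c^i>0$, which contradicts $\sigma_i(x(t^*))=0$. Hence $x\in\Gamma_k$, so $\mathcal{P}_r\subset\Gamma_k$.

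The main point needing care is exactly this well-definedness issue: the inequality \eqref{M4} must be used only where $E_i\ge0$. The continuity/first-exit argument is designed to handle this. If the proof of Theorem \ref{Th1.5} in fact yields $E_k\ge M_r^k$ directly for all $x\in\overline{\mathcal{P}}_r$ without root extraction, the argument shortens. In that case positivity of each $\sigma_j$, $j\le k$, is immediate from $M_r>0$ on the open cone.
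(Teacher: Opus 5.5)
Your argument is correct, but it takes a different route from the paper's. You use Theorem \ref{Th1.5} as a black box for each index $2\le j\le k$. You then turn the mean inequality into positivity of each $\sigma_j$ by a first-exit argument along the segment from $(1,\dots,1)$ to $x$ inside the convex cone $\mathcal{P}_r$. This is sound: $t^*>0$ and $\sigma_i(x(t))>0$ for $t<t^*$ by continuity, and the index $i=1$ cannot be the one that vanishes because $\mathcal{P}_r\subset\Gamma_1$.

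The paper instead goes back into the proof of Theorem \ref{Th1.5}. For each $j\le k$ and each $jr$-subset $I$, the identity \eqref{S} writes $\sigma_j(x^I)$ as a positive multiple of a sum of products of $j$ disjoint $r$-sums. Each of these is strictly positive on $\mathcal{P}_r$, so $\sigma_j(x^I)>0$. Averaging via $\sum_{|I|=jr}\sigma_j(x^I)=C_{n-j}^{jr-j}\sigma_j(x)$ then gives $\sigma_j(x)>0$.

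Comparing the two: the paper's route is purely algebraic and needs neither AM-GM nor any topology. It is essentially the shortcut you anticipate in your last paragraph. Indeed, the proof of Theorem \ref{Th1.5} establishes $\sigma_k\ge 0$ on $\overline{\mathcal{P}}_r$ through \eqref{S} (and its averaged version in Case 2) before any root is taken, so your well-definedness worry does not actually arise. Your route, in exchange, uses only the statement of the inequality. It isolates a principle in the spirit of the paper's conjecture: a mean inequality $E_j^{1/j}\ge M_r$ known merely on $\Gamma_k\cap\mathcal{P}_r$, for all $2\le j\le k$, already forces $\mathcal{P}_r\subset\Gamma_k$. This follows from connectedness of $\mathcal{P}_r$ and positivity of $M_r$ there.
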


\begin{remark}
    In \cite{li2020inclusion}, the authors proved that $\mathcal{P}_2\subset \Gamma_2$ in $\mathbb{R}^4$, which is a special case of Corollary \ref{Co1.5}. Thus, Corollary \ref{Co1.5} can be regarded as a generalization of the result in \cite{li2020inclusion}. 
\end{remark}
 
Combining the classical Maclaurin's inequality, Corollary \ref{Co1.2} and Theorems \ref{Th1.4}, \ref{Th1.5}, we can easily obtain the complete inequality chain on $\overline{K^+}$ between all Maclaurin means when $n=3,4$. However, when $n\geq5$, the complete inequality chain on $\overline{K^+}$ between all Maclaurin means generally does not exist, see Remark \ref{n geq 5} in Section \ref{Section 3}.

\begin{Corollary} \label{Co1.6}
     Let $n=3$, then for any $x=(x_1,x_2,x_3)\in \overline{K^+}$, the following inequality chain holds:
    \begin{equation} \label{n=3}
       M_3(x) = E_1(x) \geq M_2(x) \geq E_2^\frac{1}{2}(x) \geq M_1(x) = E_3^\frac{1}{3}(x).
    \end{equation}
\end{Corollary}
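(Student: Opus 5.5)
The chain \eqref{n=3} will be assembled from results already stated; there is nothing new to prove. I first record the two identities. By definition, $M_3(x)=\Pi_3^{1/C_3^3}(x)/3=(x_1+x_2+x_3)/3=E_1(x)$. Also $M_1(x)=\Pi_1^{1/C_3^1}(x)=(x_1x_2x_3)^{1/3}=E_3^{1/3}(x)$. Both hold on $\overline{K^+}$, where all the roots are well defined.

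Next come the three inequalities, each obtained by specialising an earlier result to $n=3$. I use throughout that $\overline{K^+}$ is contained in every $\overline{\mathcal{P}}_k$ and every $\overline{\Gamma}_k$.

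\emph{First inequality, $E_1\ge M_2$.} This is Corollary \ref{Co1.2} with $j=2$ and $l=3$, which gives $M_3\ge M_2$ on $\overline{\mathcal{P}}_2\supset\overline{K^+}$; combining with $M_3=E_1$ yields the claim. Alternatively, it follows from Theorem \ref{Th1.4} with $k=3$.

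\emph{Second inequality, $M_2\ge E_2^{1/2}$.} This is Theorem \ref{Th1.4} with $n=3$ and $k=2$. There $n-k+1=2$, so it gives $M_2\ge E_2^{1/2}$ on $\overline{\Gamma}_2\supset\overline{K^+}$. The hypothesis $n\ge3$ of that theorem is satisfied.

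\emph{Third inequality, $E_2^{1/2}\ge E_3^{1/3}$.} This is the classical Maclaurin inequality \eqref{M1} with $j=2$ and $l=3$, valid on $\overline{\Gamma}_3=\overline{K^+}$. Combining with $E_3^{1/3}=M_1$ completes the chain.

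No step is a genuine obstacle. The only point to check is that each cited result's domain contains $\overline{K^+}$, which follows from the cone inclusions in Figure \ref{Fig1.1}. As an independent sanity check, one could also verify $M_2\ge M_1$ from Corollary \ref{Co1.2} and observe that it is consistent with the chain.
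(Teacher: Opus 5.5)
Your argument is correct and is exactly the combination the paper intends: it omits the proof as an immediate consequence of Corollary \ref{Co1.2}, Theorem \ref{Th1.4} with $k=2$, and the classical Maclaurin inequality \eqref{M1}, together with the identities $M_3=E_1$ and $M_1=E_3^{1/3}$. One small slip: your aside that $E_1\ge M_2$ ``alternatively follows from Theorem \ref{Th1.4} with $k=3$'' is wrong, because that case only yields the identity $M_3\ge E_1$ (Theorem \ref{Th1.4} always bounds some $M_k$ from below), so drop that remark.
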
 
   
\begin{Corollary} \label{Co1.7}
    Let $n=4$, then for any $x=(x_1,x_2,x_3,x_4)\in \overline{K^+}$, the following inequality chain holds:
    \begin{equation} \label{n=4}
        M_4(x) = E_1(x) \geq M_3(x) \geq E_2^\frac{1}{2}(x) \geq M_2(x) \geq E_3^\frac{1}{3}(x) \geq M_1(x) = E_4^\frac{1}{4}(x).
    \end{equation}
\end{Corollary}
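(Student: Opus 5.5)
The plan is to assemble the chain for $n=4$ link by link from results already stated, all restricted to $\overline{K^+}$. On $\overline{K^+}$ every Maclaurin mean is well defined. Moreover, $\overline{K^+}=\overline{\Gamma}_4=\overline{\mathcal{P}}_1$ is contained in the closure of every cone $\Gamma_k$ and $\mathcal{P}_k$, so each earlier inequality applies there. The two outer equalities are immediate from the definitions. First, $\Pi_4(x)=\sigma_1(x)$ and $C_4^4=1$, so $M_4=\sigma_1/4=E_1$. Second, $\Pi_1=\sigma_4$ and $C_4^1=1$, so $M_1=\sigma_4^{1/4}=E_4^{1/4}$.

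For the interior links I would proceed as follows.

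\textbf{Link $E_1\geq M_3$.} This is Corollary \ref{Co1.2} with $j=3$, $l=4$, which says $M_4\ge M_3$ on $\overline{\mathcal{P}}_3\supset\overline{K^+}$.

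\textbf{Link $M_3\geq E_2^{1/2}$.} This is Theorem \ref{Th1.4} with $n=4$, $k=3$, since $n-k+1=2$; it holds on $\overline{\Gamma}_2\supset\overline{K^+}$.

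\textbf{Link $E_2^{1/2}\geq M_2$.} This is Theorem \ref{Th1.5} with $k=r=2$. The hypotheses hold because $n=4\ge kr=4$ and $1<2<4$. It gives $E_2^{1/2}\ge M_2$ on $\overline{\mathcal{P}}_2\supset\overline{K^+}$.

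\textbf{Link $M_2\geq E_3^{1/3}$.} This is Theorem \ref{Th1.4} with $k=2$, since $n-k+1=3$; it holds on $\overline{\Gamma}_3\supset\overline{K^+}$.

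\textbf{Link $E_3^{1/3}\geq M_1$.} Since $M_1=E_4^{1/4}$, this is the classical Maclaurin inequality \eqref{M1} with $j=3$, $l=4$ on $\overline{\Gamma}_4=\overline{K^+}$.

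Chaining the five links gives \eqref{n=4}.

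There is no real obstacle here. The only point needing care is checking the parameter conditions: $n\ge 3$ in Theorem \ref{Th1.4}, and $n\ge kr$ with $1<k,r<n$ in Theorem \ref{Th1.5}. One should also confirm that $\overline{K^+}$ lies in each closed cone used, which follows from the inclusion network in Figure \ref{Fig1.1}: $\overline{K^+}=\overline{\Gamma}_4\subset\overline{\Gamma}_k$ and $\overline{K^+}=\overline{\mathcal{P}}_1\subset\overline{\mathcal{P}}_k$ for every $k$.
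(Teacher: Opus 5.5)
Your proposal is correct and is the argument the paper intends when it says Corollary \ref{Co1.7} follows by combining \eqref{M1}, Corollary \ref{Co1.2} and Theorems \ref{Th1.4}, \ref{Th1.5}. You assign each link to the right result with valid parameters ($k=3$ and $k=2$ in Theorem \ref{Th1.4}; $k=r=2$, $n=kr=4$ in Theorem \ref{Th1.5}), you get the endpoint identities from the definitions, and you correctly note that $\overline{K^+}$ lies in every closed cone you use.
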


\begin{remark}
    Of course, for $n=3,4$, we have the corresponding inclusion relations of the cones associated with inequality chains \eqref{n=3} and \eqref{n=4}, which are
    \begin{equation*}
        \Gamma_3 = \mathcal{P}_1 \subset \Gamma_2 \subset\mathcal{P}_2\subset \Gamma_1 = \mathcal{P}_3 \quad \text{in} \quad \mathbb{R}^3,
    \end{equation*}
    and
    \begin{equation*}
      \Gamma_4 = \mathcal{P}_1 \subset \Gamma_3 \subset \mathcal{P}_2 \subset \Gamma_2 \subset\mathcal{P}_3\subset \Gamma_1 = \mathcal{P}_4 \quad \text{in} \quad \mathbb{R}^4,
    \end{equation*}
    see \cite{li2020inclusion}.
\end{remark}

In general, for $n\geq5$, combining Remark \ref{remark 1.2}, Corollary \ref{Co1.5} and Remark \ref{n geq 5}, we can obtain the complete inclusion relations of G$\mathring{\text{a}}$rding's cones and $k$-convex cones in $\mathbb{R}^n$, as shown in Figure \ref{Fig1.2}.

\begin{figure}[htbp]
  \centering
  \includegraphics[width=0.9\textwidth]{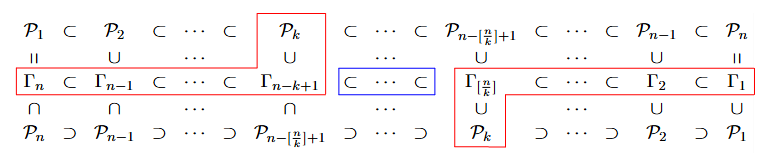} 
  \vspace{-10pt}
  \caption{The inclusion relations of G$\mathring{\text{a}}$rding's cones and $k$-convex cones $(n\geq5)$} 
  \label{Fig1.2}
\end{figure}

In Figure \ref{Fig1.2}, for the $(n-1)$-convex cone $\mathcal{P}_{n-1}$, its inclusion relations with the other cones are obvious; for the $k$-convex cone $\mathcal{P}_k$, where $2\leq k\leq n-2$, the red box indicates that it has an inclusion relation with some of the G$\mathring{\text{a}}$rding's cones, but with none of the remaining G$\mathring{\text{a}}$rding's cones in the blue box. This provides a complete description of the inclusion relations among these cones. Moreover, the complete inequality relations between the Maclaurin means associated with the inclusion relations of G$\mathring{\text{a}}$rding's cones and $k$-convex cones shown in Figure \ref{Fig1.2} are also obtained by combining inequalities \eqref{M1}, \eqref{M2}, \eqref{M3}, \eqref{M4} and Remark \ref{n geq 5}, as shown in Figure \ref{Fig1.3}. Analogously, the red box indicates that the Maclaurin mean $M_k$ has an inequality relation with some of the Maclaurin means corresponding to the G$\mathring{\text{a}}$rding's cones, but has no inequality relation on $\overline{K^+}$ with any of the remaining Maclaurin means corresponding to the G$\mathring{\text{a}}$rding's cones in the blue box. This confirms the conjecture proposed in this paper. Comparing the inclusion relations of the cones with the inequality relations among the Maclaurin means, it is interesting that the two are strikingly consistent. This may suggest something deeper connection. By the way, we believe that the inequalities proved in this paper can be applied to the study of problems related to the $k$-Hessian equations and $\Pi_k$-Hessian equations.

\begin{figure}[htbp]
  \centering
  \includegraphics[width=0.9\textwidth]{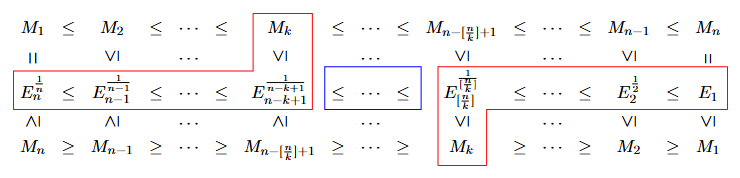} 
  \vspace{-10pt}
  \caption{The generalized chain of Maclaurin's inequalities $(n\geq5)$} 
  \label{Fig1.3}
\end{figure}

This paper is organized as follows. In Section \ref{Section 2}, we first prove a local logarithmic inequality which is key to the proof of Theorem \ref{Th1.3}. Moreover, we present some useful properties of the elementary symmetric functions and prove a key inequality, which will be employed in the subsequent proofs. In Section \ref{Section 3}, we provide the proofs of Theorems \ref{Th1.3}, \ref{Th1.4}, \ref{Th1.5} and the related corollaries, respectively.

\vspace{3mm}

\section{Preliminaries} \label{Section 2}

In this section, we introduce some preliminary results, which mainly involve a local logarithmic inequality, some basic facts about the elementary symmetric functions, and a key inequality concerning the Maclaurin means. These results will be used in Section \ref{Section 3}.

\subsection{A local logarithmic inequality}
In this subsection, we prove a local logarithmic inequality on a certain domain contained in some affine hyperplane, which plays an important role in the proof of Theorem \ref{Th1.3}. 

For any given integer $q\ge3$, we define the domain:
\begin{equation} \label{local-domain}
 \mathcal D_q:=\left\{ x=(x_1,\ldots,x_q)\in\mathbb R^q:
 \sum_{i=1}^q x_i=1,\ \
 1-x_i-x_j>0,\, \forall 1\le i<j\le q \right\}.
\end{equation}
Clearly, the set $\mathcal D_q$ is relatively open and convex in the affine hyperplane $\sum_{i=1}^q x_i=1$. Moreover, we denote the vector
$x^q := (1/q,\ldots,1/q)\in\mathbb R^q$. Obviously, we have $x^q\in\mathcal D_q$. To prove Theorem \ref{Th1.3}, we need to study a specific function defined on the domain $\mathcal{D}_q$. We define the function
\begin{equation}
    \mathcal L_q(x)
 :=(q-1)\sum_{i=1}^q\log(1-x_i) - \sum_{1\le i<j\le q}\log(1-x_i-x_j), \quad x\in\mathcal{D}_q.
\end{equation}
Firstly, we note that the function $\mathcal{L}_q$ is well defined on $\mathcal{D}_q$, we need to show that $1-x_i>0$ for any $1\leq i\leq q$ on $\mathcal{D}_q$. To see this, for any fixed $1\leq i\leq q$, since $\sum_{j=1}^q x_j=1$, so we have
\begin{equation} \label{well-defined}
\begin{aligned}
 \sum_{\substack{j=1 \\ j\neq i}}^q(1-x_i-x_j)
 &=(q-1)(1-x_i)-\sum_{\substack{j=1 \\ j\neq i}}^q x_j\\
 &=(q-1)(1-x_i)-(1-x_i)\\
 &=(q-2)(1-x_i).
\end{aligned}
\end{equation}
Every summand in the first line of \eqref{well-defined} is positive and $q-2>0$, thus $1-x_i>0$.
The function $\mathcal{L}_q$ is the key to our proof of Theorem \ref{Th1.3}, and we have the following lemmas about the function $\mathcal{L}_q$.

\begin{Lemma} \label{Lemma2.1}
    Let $q\geq3$. For any given $x\in\mathcal{D}_q$ and $x\neq x^q$, let $i\neq j$ such that 
    \begin{equation}
        x_i=\min_{1\leq l\leq q}x_l,\quad x_j=\max_{1\leq l\leq q} x_l.
    \end{equation}
    Then we have
    \begin{equation}
        \frac{\partial\mathcal{L}_q}{\partial x_i}(x) < \frac{\partial\mathcal{L}_q}{\partial x_j}(x).
    \end{equation}
\end{Lemma}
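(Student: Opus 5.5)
Compute the partial derivatives of $\mathcal L_q$, regarded as a function on the open set of $\mathbb R^q$ where all logarithms are defined, and then compare them:
\begin{equation*}
\frac{\partial\mathcal L_q}{\partial x_l}(x) = -\frac{q-1}{1-x_l} + \sum_{m\neq l}\frac{1}{1-x_l-x_m}.
\end{equation*}
Write $D:=\frac{\partial\mathcal L_q}{\partial x_j}(x)-\frac{\partial\mathcal L_q}{\partial x_i}(x)$ and set $a=x_i<b=x_j$. Since $x\neq x^q$ and $\sum_l x_l=1$, the minimum is strictly below the maximum, so $a<b$.

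In the difference, the terms with $m=i$ and $m=j$ coincide. Indeed, $\frac{1}{1-x_j-x_i}$ appears in both sums, so it cancels. Hence
\begin{equation*}
D = (q-1)\Big(\frac{1}{1-a}-\frac{1}{1-b}\Big) + \sum_{m\neq i,j}\Big(\frac{1}{1-b-x_m}-\frac{1}{1-a-x_m}\Big).
\end{equation*}
Each bracket in the sum equals $\frac{b-a}{(1-b-x_m)(1-a-x_m)}>0$, by positivity on $\mathcal D_q$. The first term equals $-(q-1)\frac{b-a}{(1-a)(1-b)}$. Dividing by $b-a>0$, it remains to show
\begin{equation*}
\sum_{m\neq i,j}\frac{1}{(1-a-x_m)(1-b-x_m)} > \frac{q-1}{(1-a)(1-b)}.
\end{equation*}
The main obstacle is that the left side has only $q-2$ terms while the right side has coefficient $q-1$. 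So the termwise bound $\frac{1}{(1-a-x_m)(1-b-x_m)}>\frac1{(1-a)(1-b)}$, valid when $x_m>0$, is not enough by itself, and it fails anyway when $x_m\le0$.

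I would use convexity instead. The function $t\mapsto \frac{1}{(1-a-t)(1-b-t)}$ is convex on the range where both factors are positive, being a product of two positive, increasing, log-convex functions. By Jensen over the $q-2$ values $x_m$, whose mean is $\bar t=\frac{1-a-b}{q-2}$, the left side is at least $\frac{q-2}{(1-a-\bar t)(1-b-\bar t)}$. It then suffices to prove
\begin{equation*}
(q-2)(1-a)(1-b) > (q-1)(1-a-\bar t)(1-b-\bar t).
\end{equation*}
This is an explicit two-variable polynomial inequality in $a<b$, using $\bar t\ge a$ and $\bar t\le b$ (since $a$ is the min and $b$ the max).

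Set $s=1-a-b$, so that $\bar t=s/(q-2)$, $1-a-\bar t=b+s\frac{q-3}{q-2}$, and $1-b-\bar t=a+s\frac{q-3}{q-2}$. Then expand. At the symmetric point $a=b=\bar t=1/q$ equality should hold, with both sides equal to $\frac{(q-2)(q-1)^2}{q^2}$. The plan is to show that the difference factors as a positive multiple of $(b-a)^2$ plus a nonnegative term coming from $a\le\bar t\le b$.

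If that algebra turns out not to close, the fallback is to first reduce to the worst configuration by a smoothing argument. For fixed $a,b$, the left side is minimized, over the $x_m$ in $[a,b]$ with fixed sum, at the equal point. This is exactly what Jensen gives, so the final step is a one-variable check along the constraint, which I expect to be routine but tedious.
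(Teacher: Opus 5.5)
Up to the last step your argument is the paper's own. You use the same derivative formula, the same cancellation of the $\frac{1}{1-x_i-x_j}$ terms, division by $b-a$, convexity of $t\mapsto\frac{1}{(1-a-t)(1-b-t)}$, and Jensen (the paper uses the tangent line) at the mean $\bar t$ of the other $q-2$ coordinates; the paper calls this mean $c$. Your log-convexity justification is a fine substitute for the paper's second-derivative computation.

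The gap is the final inequality $(q-2)(1-a)(1-b)>(q-1)(1-a-\bar t)(1-b-\bar t)$. After the convexity step this inequality is all that remains of the lemma, and you do not prove it. The route you sketch also rests on an arithmetic error. At $a=b=\bar t=1/q$ the left side is $\frac{(q-2)(q-1)^2}{q^2}$, but the right side is $\frac{(q-1)(q-2)^2}{q^2}$, not the same value. So there is no equality at the symmetric point; the difference there is $\frac{(q-1)(q-2)}{q^2}>0$.

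This is expected. After dividing $D$ by $b-a$, the quantity $\sum_{m\neq i,j}\frac{1}{(1-a-x_m)(1-b-x_m)}-\frac{q-1}{(1-a)(1-b)}$ equals $\frac{q^2}{(q-1)(q-2)}$ at the symmetric configuration, even though $D$ itself vanishes there. Consequently, the decomposition you hope for (a positive multiple of $(b-a)^2$ plus a nonnegative term coming from $a\le\bar t\le b$) cannot be right, since it would vanish at $a=b=\bar t$. Your plan also looks for strictness in $a<b$, which is not where it comes from. The fallback adds nothing: it re-derives the Jensen step and leaves the same two-variable inequality in $(a,b)$.

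What closes the gap is a direct expansion using $a+b=1-(q-2)\bar t$. This gives $(1-a)(1-b)=(q-2)\bar t+ab$ and $(1-a-\bar t)(1-b-\bar t)=(q-3)\bar t(1-\bar t)+ab$, hence
\[
(q-2)(1-a)(1-b)-(q-1)(1-a-\bar t)(1-b-\bar t)=\bar t-ab+(q-1)(q-3)\bar t^{2}=(q-1)(q-2)\bar t^{2}+(\bar t-a)(b-\bar t).
\]
The first equality uses $(q-2)^2-(q-1)(q-3)=1$. The second uses $(\bar t-a)(b-\bar t)=\bar t(a+b)-\bar t^{2}-ab=\bar t-(q-1)\bar t^{2}-ab$.

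The term $(\bar t-a)(b-\bar t)$ is nonnegative because $a\le\bar t\le b$. Strict positivity comes from the term $(q-1)(q-2)\bar t^{2}$, that is, from $\bar t=\frac{1-x_i-x_j}{q-2}>0$. This is the defining condition of $\mathcal D_q$, and your plan never invokes it at this stage. This is exactly the identity the paper uses to finish.
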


\begin{proof}
For convenience, we let $a:=x_i=\min_{1\leq l\leq q}x_l$ and $b:=x_j=\max_{1\leq l\leq q}x_l$. Clearly, we have $a<b$, since $x\neq x^q$. Moreover, we define the average of the other $q-2$ coordinates of $x\in\mathcal{D}_q$ by
\begin{equation}\label{c-definition}
 c:=\frac1{q-2}\sum_{\substack{l=1 \\l\neq i,j}}^q x_l
 =\frac{1-a-b}{q-2}.
\end{equation}
By the definition of $a,b$ and $\mathcal{D}_q$, we have $c>0$ and $a\le c\le b$.

For any $1\leq k\leq q$, by direct calculation, we have
\begin{equation}\label{eq:partial}
 \frac{\partial\mathcal L_q}{\partial x_k}(x)
 =-\frac{q-1}{1-x_k}
 +\sum_{\substack{l=1 \\ l\ne k}}^q\frac1{1-x_k-x_l}.
\end{equation}
Thus, for $k=i,j$, we have
\begin{equation}
\begin{aligned}
    \frac{\partial\mathcal L_q}{\partial x_i}(x)
 -\frac{\partial\mathcal L_q}{\partial x_j}(x) &= -(q-1)\left(\frac1{1-a}-\frac1{1-b}\right)
 +\sum_{\substack{l=1\\ l\neq i,j}}^q
 \left(\frac1{1-a-x_l}-\frac1{1-b-x_l}\right) \\
 &= -(q-1)\frac{a-b}{(1-a)(1-b)} + \sum_{\substack{l=1\\ l\neq i,j}}^q \frac{a-b}{(1-a-x_l)(1-b-x_l)}.
\end{aligned}
\end{equation}
Since $a-b\ne0$, we can obtain
\begin{equation} \label{E1}
 \frac{1}{a-b}\left(
 \frac{\partial\mathcal L_q}{\partial x_i}(x)
 -\frac{\partial\mathcal L_q}{\partial x_j}(x)\right) = \sum_{\substack{l=1 \\ l\neq i,j}}^q
 \frac{1}{(1-a-x_l)(1-b-x_l)}
 -\frac{q-1}{(1-a)(1-b)}.
\end{equation}
For fixed real numbers $a<b$ above, we let
\[
 h_{a,b}(s) := \frac1{(1-a-s)(1-b-s)}, \quad s\in(-\infty,1-b).
\]
Next, we compute the second derivative of $h_{a,b}$,
\[
 h_{a,b}'(s)
 =\frac1{(1-a-s)^2(1-b-s)} + \frac1{(1-a-s)(1-b-s)^2},
 \]
 thus
 \[
 h_{a,b}''(s) = \frac2{(1-a-s)^3(1-b-s)} + \frac2{(1-a-s)^2(1-b-s)^2} + \frac2{(1-a-s)(1-b-s)^3}>0,
\]
since $1-a-s>1-b-s>0$. Therefore, the function $h_{a,b}$ is strictly convex on $(-\infty,1-b)$. For every $l\neq i,j$, we have $1-b-x_l>0$, i.e., $x_l<1-b$, so we can obtain that $c<1-b$. The convexity of $h_{a,b}$ therefore gives
\begin{equation} \label{Con}
  h_{a,b}(x_l)\ge h_{a,b}(c)+h_{a,b}'(c)(x_l-c).   
\end{equation}
Summing \eqref{Con} over all $l\neq i,j$, and because $\sum_{l=1, l\neq i,j}^q x_l=(q-2)c$, we can obtain
\begin{equation} \label{convexity-estimate}
 \sum_{\substack{l=1 \\ l\neq i,j}}^q
 \frac1{(1-a-x_l)(1-b-x_l)}
 \ge\frac{q-2}{(1-a-c)(1-b-c)}.
\end{equation}
Thus by \eqref{E1} and \eqref{convexity-estimate}, we have
\begin{equation} \label{Ineq 1}
\begin{aligned}
    \frac{1}{a-b}\left(\frac{\partial\mathcal L_q}{\partial x_i}(x) -\frac{\partial\mathcal L_q}{\partial x_j}(x)\right) &\geq \frac{q-2}{(1-a-c)(1-b-c)}-\frac{q-1}{(1-a)(1-b)} \\
    &= \frac{(q-2)(1-a)(1-b)-(q-1)(1-a-c)(1-b-c)}{(1-a)(1-b)(1-a-c)(1-b-c)}.
\end{aligned}
\end{equation}
From \eqref{c-definition}, we have $a+b=1-(q-2)c$. Then by direct calculation,
\begin{equation} \label{E2}
\begin{aligned}
 &\quad\ (q-2)(1-a)(1-b)-(q-1)(1-a-c)(1-b-c) \\
 &= -(1-a)(1-b)+(q-1)(2-a-b)c-(q-1)c^2 \\
 &=-1+a+b-ab+(q-1)(2-a-b)c-(q-1)c^2 \\
 &=-(q-2)c-ab+(q-1)\big(1+(q-2)c\big)c-(q-1)c^2 \\
 &=c-ab+(q-1)(q-3)c^2.
\end{aligned}
\end{equation}
On the other hand,
\begin{equation} \label{E3}
\begin{aligned}
 &\quad\ (q-1)(q-2)c^2+(c-a)(b-c) \\
 &=(q-1)(q-2)c^2+c(a+b)-c^2-ab \\
 &=(q-1)(q-2)c^2+c\bigl(1-(q-2)c\bigr)-c^2-ab \\
 &=c-ab+(q-1)(q-3)c^2.
\end{aligned}
\end{equation}
Thus, by \eqref{E2} and \eqref{E3}, we obtain
\begin{equation} \label{E4}
\begin{aligned}
 &\quad\ (q-2)(1-a)(1-b)-(q-1)(1-a-c)(1-b-c) \\
 &= (q-1)(q-2)c^2+(c-a)(b-c)>0,
\end{aligned}
\end{equation}
since $c>0$, $q\ge3$, and $a\leq c\leq b$. Furthermore, from the previous discussion, we have
\begin{equation} \label{Ineq 2}
    (1-a)(1-b)(1-a-c)(1-b-c)>0.
\end{equation}
Combining \eqref{Ineq 1}, \eqref{E4} and \eqref{Ineq 2}, we have 
\begin{equation*}
\begin{aligned}
 &\quad\ \frac1{a-b}\left(
 \frac{\partial\mathcal L_q}{\partial x_i}(x)
 -\frac{\partial\mathcal L_q}{\partial x_j}(x)\right) \ge\frac{(q-1)(q-2)c^2+(c-a)(b-c)}{(1-a)(1-b)(1-a-c)(1-b-c)}>0.\label{eq:strict-derivative-sign}
\end{aligned}
\end{equation*}
Since $a-b<0$, it follows that
\begin{equation}
 \frac{\partial\mathcal L_q}{\partial x_i}(x) -\frac{\partial\mathcal L_q}{\partial x_j}(x) < 0.
\end{equation}
Thus, we complete the proof of this lemma.
\end{proof}
By employing the above lemma, we can prove the function $\mathcal{L}_q$ admit minimum on $\mathcal{D}_q$.

\begin{Lemma}[] \label{Lemma2.2}
Let $q\ge3$. The function $\mathcal{L}_q$ satisfies
\begin{equation}\label{local-minimum}
 \mathcal L_q(x)\ge \mathcal{L}_q(x^q) = C_q^2\log\left(\frac{(q-1)^2}{q(q-2)}\right) \quad \text{on} \quad \mathcal{D}_q,
\end{equation}
where equality holds if and only if $x=x^q$. That is, the function $\mathcal{L}_q$ attains its unique minimum at $x^q$ on $\mathcal{D}_q$.
\end{Lemma}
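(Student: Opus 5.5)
The plan is to combine Lemma \ref{Lemma2.1} with a compactness/boundary argument, so that the minimum of $\mathcal L_q$ over $\mathcal D_q$ is attained at an interior critical point that Lemma \ref{Lemma2.1} forces to be $x^q$. First I would compute the value at $x^q$. There $1-x_i=(q-1)/q$ and $1-x_i-x_j=(q-2)/q$, so
\[
\mathcal L_q(x^q)=q(q-1)\log\frac{q-1}{q}-C_q^2\log\frac{q-2}{q}=C_q^2\log\frac{(q-1)^2}{q(q-2)},
\]
using $q(q-1)=2C_q^2$.

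Second, I would show that $\mathcal L_q(x)\to+\infty$ as $x$ approaches the relative boundary of $\mathcal D_q$, and also at infinity. The domain $\mathcal D_q$ is bounded: by \eqref{well-defined}, $1-x_i>0$, so $x_i<1$; together with $\sum_l x_l=1$ this gives $x_i>1-(q-1)=2-q$. Hence $\overline{\mathcal D_q}$ is compact. On the boundary, some $1-x_i-x_j\to0^+$. The terms $-\log(1-x_i-x_j)$ are bounded below because each $1-x_i-x_j<2q$, and at least one of them blows up to $+\infty$. The first sum is different, since $\log(1-x_k)$ could also tend to $-\infty$. This would need $1-x_k\to0$, which by \eqref{well-defined} forces all $1-x_k-x_l\to0$ for $l\ne k$. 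The delicate point is to compare rates. Write $\varepsilon=1-x_k$. Then $\sum_{l\ne k}(1-x_k-x_l)=(q-2)\varepsilon$, so some $1-x_k-x_l\le \frac{q-2}{q-1}\varepsilon$, but the others could be as large as about $\varepsilon$ too. This makes the naive count $(q-1)\log\varepsilon$ versus $(q-1)$ pair terms borderline. So I would instead argue by concavity: for fixed $k$, Jensen applied to $-\log$ gives
\[
-\sum_{l\ne k}\log(1-x_k-x_l)\ge -(q-1)\log\frac{(q-2)\varepsilon}{q-1},
\]
which exactly cancels $(q-1)\log\varepsilon$ up to the constant $(q-1)\log\frac{q-1}{q-2}$. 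There may be several indices $k$ with $1-x_k\to0$; I would argue that at most $\lfloor q/2\rfloor$-type structure occurs. Simpler, I would use that two such indices $k,k'$ give $1-x_k-x_{k'}\to0$ with $\sum x=1$, then check the remaining pair terms still diverge.

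This boundary analysis is the main obstacle. If the Jensen bookkeeping becomes messy, I would fall back to an alternative that avoids the boundary entirely: a smoothing argument along segments inside the convex set $\mathcal D_q$. Take a minimizing sequence; if compactness fails, use Lemma \ref{Lemma2.1} to move mass from a max coordinate to a min coordinate. The direction $e_i-e_j$ is tangent to the hyperplane, and by Lemma \ref{Lemma2.1} $\mathcal L_q$ strictly decreases in that direction. Moving toward equalization keeps the point in $\mathcal D_q$, since $\mathcal D_q$ is convex and permutation-symmetric, so averaging the two coordinates stays inside.

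Third, given the minimum is attained at an interior point $y$ of $\mathcal D_q$, I would conclude as follows. Since $\mathcal D_q$ is relatively open, the Lagrange condition requires $\partial_{x_i}\mathcal L_q(y)=\partial_{x_j}\mathcal L_q(y)$ for all $i,j$. If $y\neq x^q$, taking $i,j$ to be the argmin and argmax coordinates contradicts Lemma \ref{Lemma2.1}. Hence $y=x^q$ is the unique critical point, and therefore the unique minimizer, which gives \eqref{local-minimum} with equality only at $x^q$.
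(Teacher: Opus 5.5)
There is a genuine gap in your second step, which is the only nontrivial part. Your value at $x^q$ is correct, and your final ``unique critical point'' argument works once a minimizer over $\mathcal D_q$ is known to exist. The problem is that your claim that $\mathcal L_q\to+\infty$ at the relative boundary of $\mathcal D_q$ is false. Away from the vertices $e_k$ of $\overline{\mathcal D_q}$ the blow-up does hold. But by \eqref{well-defined}, $1-x_k\to0$ forces every $1-x_k-x_l\to0$, hence $x\to e_k$, and near $e_k$ the function stays bounded. Along $x=(1-\varepsilon,\frac{\varepsilon}{q-1},\dots,\frac{\varepsilon}{q-1})\in\mathcal D_q$ one gets $\mathcal L_q(x)\to(q-1)\log\frac{q-1}{q-2}$; for $q=3$, along $(1-2\varepsilon,\varepsilon,\varepsilon)$, the limit is $2\log 2$. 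Your Jensen computation is exposing exactly this: it yields only the finite bound $\liminf\ge(q-1)\log\frac{q-1}{q-2}$ near $e_k$, not divergence.

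To finish along your route you would have to compare this constant with the minimum value. For instance,
$(q-1)\log\bigl(1+\frac{1}{q-2}\bigr)>1\ge\frac{q-1}{2(q-2)}\ge C_q^2\log\bigl(1+\frac{1}{q(q-2)}\bigr)=\mathcal L_q(x^q)$.
You would also need to observe that two indices with $1-x_k\to0$ cannot occur simultaneously. None of this is in the proposal.

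Your fallback does not close the gap either. Lemma \ref{Lemma2.1} only gives the sign of one directional derivative at the current point, and once you move, the argmin/argmax pair may change. So it yields neither a global descent nor any control preventing a minimizing sequence from escaping to the boundary.

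The missing idea, which is how the paper argues, is to sidestep the boundary entirely. Fix $x^0\in\mathcal D_q$ and minimize over $\mathcal H(x^0)$, the convex hull of all coordinate permutations of $x^0$. This set has the following properties:
\begin{enumerate}
\item it is compact, convex and permutation invariant;
\item it contains both $x^0$ and $x^q$;
\item it lies in $\mathcal D_q$, because $1-x_i-x_j\ge\min_{k<l}(1-x^0_k-x^0_l)>0$ on it.
\end{enumerate}
Hence a minimizer $x$ exists on $\mathcal H(x^0)$. If $x\neq x^q$, let $x'$ be $x$ with its min and max coordinates swapped. By convexity and permutation invariance, the segment $(1-\theta)x+\theta x'$ stays in $\mathcal H(x^0)$, and Lemma \ref{Lemma2.1} makes the derivative of $\mathcal L_q$ along it at $\theta=0$ negative, a contradiction. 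Then $\mathcal L_q(x^0)\ge\mathcal L_q(x^q)$, with equality iff $x^0=x^q$.

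This one-sided first-order argument replaces your Lagrange condition. That condition would not be available on $\mathcal H(x^0)$ anyway, since a priori the minimizer could lie on its relative boundary.
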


\begin{proof}
Obviously, the function $\mathcal{L}_q$ is continuous on the domain $\mathcal{D}_q$, but $\mathcal{D}_q$ is not compact, so we cannot assert whether $\mathcal{L}_q$ attains a minimum on $\mathcal{D}_q$. To prove that the minimum of $\mathcal{L}_q$ on the domain $\mathcal{D}_q$ indeed exists and is attained only at $x^q$, we employ a compactness argument together with Lemma \ref{Lemma2.1}, which suffices to yield the desired result. 

To do this, we fix any $x^{0}=(x_1^0,...,x_q^0)\in\mathcal D_q$. Let $S_q$ denotes the symmetric group on $q$ letters. For any $\pi\in S_q$, we set
\[
 x_\pi^{0}
 :=(x^{0}_{\pi(1)},...,x^{0}_{\pi(q)})\in \mathbb{R}^q,
\]
and $\mathcal{H}(x^0)$ denotes the convex hull of all vectors obtained by permuting the coordinates of $x^0$, that is
\begin{equation}
 \mathcal H(x^{0})
 :=\left\{ \sum_{\pi\in S_q}t_\pi x_\pi^{0} \in\mathbb{R}^q : \sum_{\pi\in S_q} t_\pi=1,\ t_\pi\ge0, \forall \pi\in S_q \right\}.
\end{equation}
Obviously, we have $x^0\in\mathcal H(x^{0})$. The coefficients $(t_\pi)_{\pi\in S_q}$ range over a bounded closed set in $\mathbb R^{q!}$, which is compact, and the mapping $(t_\pi)_{\pi\in S_q} \mapsto \sum_{\pi\in S_q}t_\pi x_\pi^{0}$ is clearly continuous. Thus $\mathcal H(x^{0})$ is compact. Furthermore, $\mathcal{H}(x^0)$ is convex by the definition of convex hull, and is invariant under every coordinate permutation, that is, for any $x=(x_1,...,x_q)\in\mathcal{H}(x^0)$ and $\pi\in S_q$, we have $x_\pi=(x_{\pi(1)},...,x_{\pi(q)})\in\mathcal{H}(x^0)$. We also have $x^q\in\mathcal{H}(x^0)$. To see this, we take $t_\pi=1/{q!}$ for every $\pi\in S_q$. Then for any fixed $1\leq r\leq q$, we have
\[
 \frac{1}{q!}\sum_{\pi\in S_q}x^{0}_{\pi(r)} = \frac{(q-1)!}{q!}\sum_{l=1}^q x^{0}_l = \frac1q.
\]
In fact, $\mathcal{H}(x^0)$ is contained in $\mathcal{D}_q$, let
\[
 \delta := \min_{1\le i<j\le q} \big(1-x^{0}_i-x^{0}_j\big)>0.
\]
For any $x=\sum_{\pi\in S_q}t_\pi x_\pi^{0} \in \mathcal H(x^{0})$ , we have 
\[
\sum_{i=1}^q x_i= \sum_{i=1}^q\sum_{\pi\in S_q}t_\pi x_{\pi(i)}^{0} = \sum_{\pi\in S_q}t_\pi \sum_{i=1}^q x_{\pi(i)}^0 = \sum_{i=1}^q x_i^0=1.
\]
And for any $1\le i<j\le q$,
\begin{align}
 1-x_i-x_j = \sum_{\pi\in S_q}t_\pi \bigl(1-x^{0}_{\pi(i)}-x^{0}_{\pi(j)}\bigr) \ge \delta\sum_{\pi\in S_q}t_\pi=\delta>0.
\end{align}
Therefore, $\mathcal H(x^{0})\subset\mathcal D_q$. Since the function $\mathcal L_q$ is continuous on $\mathcal H(x^{0})$, and $\mathcal H(x^{0})$ is compact, hence $\mathcal{L}_q$ attains its minimum on $\mathcal H(x^{0})$.

Let $x\in\mathcal H(x^{0})$ be a minimizer of $\mathcal{L}_q$. We shall prove that $x$ must be $x^q$. Suppose that $x\neq x^q$, let $i\neq j$ such that $x_i=a=\min_{1\le l \le q}x_l$ and $x_j=b=\max_{1\le l\le q}x_l$ as before, so $a<b$. Let $x'$ be obtained from $x$ by exchanging the $i$-th and $j$-th coordinates and leaving all other coordinates fixed. Then the permutation invariance and convexity of $\mathcal H(x^{0})$ imply that
\[
 x(\theta) := (1-\theta)x + \theta x' \in\mathcal H(x^{0}), \quad \forall\ 0\le\theta\le1.
\]
In particular, the coordinates of $x(\theta)$ are $x_i(\theta) = a+\theta(b-a)$, $x_j(\theta) = b-\theta(b-a)$, and $x_l(\theta) = x_l$ for any $l\ne i,j$. Then the chain rule gives
\[
 \left.\frac{d}{d\theta}\mathcal L_q \big( x(\theta) \big)\right|_{\theta=0} = (b-a)\left( \frac{\partial\mathcal L_q}{\partial x_i}(x)
 -\frac{\partial\mathcal L_q}{\partial x_j}(x)\right)<0,
\]
where Lemma \ref{Lemma2.1} is applied to $x$. Thus, we can obtain that $\mathcal L_q\big( x(\theta) \big) < \mathcal L_q(x)$ for all sufficiently small $\theta>0$. This contradicts the fact that $x$ is a minimizer of $\mathcal{L}_q$ on $\mathcal H(x^{0})$, therefore $x$ must be $x^q$. This proves uniqueness of the minimizer of $\mathcal{L}_q$ on the compact set $\mathcal H(x^{0})$.

Because $x^{0}\in\mathcal H(x^{0})$, we have $\mathcal{L}_q(x^0)\geq \mathcal{L}_q(x^q)$, and the equality holds if and only if $x^0=x^q$. Since $x^{0}\in\mathcal{D}_q$ is arbitrary, the lemma is proved.
\end{proof}

\subsection{Some properties of $\sigma_k$} In this subsection, we present some useful properties related to the elementary symmetric functions, based on these properties, we prove a key inequality concerning the Maclaurin means, which will be used in the proof of Theorem \ref{Th1.4}.

Firstly, we introduce the notation $\sigma_k(x|i_1\cdots i_s)$ for any $1\leq k\leq n$ and $s$-tuple $\{i_1,...,i_s\}\subset\{1,...,n\}$, which is defined as follows:
\begin{equation}
    \sigma_k(x|i_1\cdots i_s) := \sigma_k(x)|_{x_{i_1}=\cdots=x_{i_s}=0}, 
\end{equation}
that is, $\sigma_k(x|i_1\cdots i_s)$ is the $k$-th elementary symmetric function of the $n-s$ variables except for $x_{i_1},...,x_{i_s}$. Next, we give some useful properties of $\sigma_k$.

\begin{Lemma} \label{Lemma2}
    Let $1\leq k\leq n$ and $x\in\mathbb{R}^n$, then we have
    \begin{equation} \label{sum property}
        \sigma_k(x) = \sigma_k(x|i) + x_i \sigma_{k-1}(x|i).
    \end{equation}
    Moreover, $\sigma_k^\frac{1}{k}$ is concave on $\Gamma_k$, and for any $x\in\Gamma_k$, we have
    \begin{equation} \label{Euler homogeneity}
        \sum_{i=1}^n x_i \frac{\partial \sigma_k^\frac{1}{k}}{\partial x_i}(x) = \sigma_k^\frac{1}{k}(x).
    \end{equation}
\end{Lemma}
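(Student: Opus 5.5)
The lemma has three parts: the splitting identity \eqref{sum property}, the concavity of $\sigma_k^{1/k}$ on $\Gamma_k$, and the Euler identity \eqref{Euler homogeneity}. We treat them in that order.

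For \eqref{sum property}, we fix $i$ and split the $k$-subsets $\{i_1<\cdots<i_k\}$ in definition \eqref{sigma} according to whether they contain $i$. The subsets not containing $i$ sum to exactly $\sigma_k(x|i)$. For the subsets containing $i$, we factor out $x_i$; what remains runs over all $(k-1)$-subsets of $\{1,\dots,n\}\setminus\{i\}$, which gives $x_i\sigma_{k-1}(x|i)$. The conventions $\sigma_0=1$ and $\sigma_k=0$ for $k>n-1$ (in $n-1$ variables) cover the edge cases $k=1$ and $k=n$.

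For \eqref{Euler homogeneity}, note that $\sigma_k$ is a homogeneous polynomial of degree $k$. Moreover $\Gamma_k$ is an open cone: if $x\in\Gamma_k$ and $t>0$, then $\sigma_j(tx)=t^j\sigma_j(x)>0$. Hence $\sigma_k^{1/k}$ is smooth on $\Gamma_k$ and homogeneous of degree one there. We differentiate $\sigma_k^{1/k}(tx)=t\,\sigma_k^{1/k}(x)$ in $t$ and set $t=1$; this is exactly Euler's relation \eqref{Euler homogeneity}.

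The concavity of $\sigma_k^{1/k}$ on $\Gamma_k$ is the only substantive part, and it is the main obstacle. It is a classical result (Gårding's theory of hyperbolic polynomials; see also Caffarelli--Nirenberg--Spruck \cite{caffarelli1985dirichlet}), and we would cite it. A self-contained proof would go as follows. First, $\sigma_k$ is hyperbolic with respect to $e=(1,\dots,1)$, and $\Gamma_k$ is the connected component of $\{\sigma_k>0\}$ containing $e$. Gårding's inequality then gives that $P^{1/k}$ is concave on the hyperbolicity cone for any hyperbolic polynomial $P$ of degree $k$. Alternatively, one can restrict to a segment and use the Newton--Maclaurin inequalities for the derivative polynomials $\sigma_{k-1}(x|i)$, showing that $\log\sigma_k$ has a Hessian with at most one positive direction in a suitable sense. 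Since the paper only needs this fact as a tool, we would state it with a reference rather than reprove it.
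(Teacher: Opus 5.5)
Your proposal is correct and matches the paper's treatment: the splitting identity comes from separating the $k$-subsets according to whether they contain $i$, the Euler identity from degree-one homogeneity of $\sigma_k^{1/k}$ on the open cone $\Gamma_k$, and concavity is cited as a classical result (the paper cites Trudinger for it). One slip in your optional sketch: the Hessian condition equivalent to concavity of $\sigma_k^{1/k}$ is that $D^2\sigma_k$ itself has at most one positive eigenvalue on $\Gamma_k$ (equivalently, $D^2\log\sigma_k\le 0$ there), not that $D^2\log\sigma_k$ has at most one positive direction; since you cite the result anyway, this does not affect the proof.
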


In the above lemma, property \eqref{sum property} is obvious. For the concavity of $\sigma_k^\frac{1}{k}$ on $\Gamma_k$, one can refer to \cite{trudinger1995dirichlet}, and \eqref{Euler homogeneity} is a direct consequence of Euler's homogeneity theorem, since $\sigma_k^\frac{1}{k}$ is a homogeneous function of degree $1$ on $\Gamma_k$.

\begin{Lemma} \label{Lemma3}
    Let $1\leq k\leq n$ and $x\in\Gamma_k$, then for any $\{i_1,...,i_s\}\subset\{1,...,n\}$ and $l+s\leq k$, we have
    \begin{equation}
        \sigma_l(x|i_1\cdots i_s) > 0.
    \end{equation} 
\end{Lemma}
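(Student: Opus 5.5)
The plan is to reduce to the single-deletion case and then iterate, using the classical fact that if $x\in\Gamma_k$ then $x$ with one coordinate removed lies in $\Gamma_{k-1}$ of $\mathbb{R}^{n-1}$. Concretely, I will first prove the claim: for $x\in\Gamma_k$ and any index $i$, we have $\sigma_l(x|i)>0$ for all $0\le l\le k-1$. Granting this, apply the claim repeatedly. Removing $x_{i_1}$ gives a vector in $\Gamma_{k-1}$ of $\mathbb{R}^{n-1}$; removing $x_{i_2}$ from that gives a vector in $\Gamma_{k-2}$ of $\mathbb{R}^{n-2}$; and so on. After $s$ deletions the remaining $(n-s)$-vector lies in $\Gamma_{k-s}$. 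Hence $\sigma_l(x|i_1\cdots i_s)>0$ for $1\le l\le k-s$, and trivially for $l=0$. This is exactly the statement, since $l+s\le k$.

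For the single-deletion claim I would argue by connectivity of the cone. $\Gamma_k$ is convex, being the component of $\{\sigma_k>0\}$ containing the positive cone, or alternatively via the concavity of $\sigma_k^{1/k}$ from Lemma \ref{Lemma2}. It also contains $e=(1,\dots,1)$, where every $\sigma_l(e|i)=C_{n-1}^l>0$. Suppose some $\sigma_l(y|i)\le0$ with $y\in\Gamma_k$ and $l\le k-1$. Move along the segment from $e$ to $y$, which stays in $\Gamma_k$, and take the first point $z$ where some $\sigma_l(\cdot|i)$ with $l\le k-1$ vanishes, choosing the smallest such $l$. Then $\sigma_{l-1}(z|i)>0$ and $\sigma_l(z|i)=0$.

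From \eqref{sum property}, $\sigma_l(z)=\sigma_l(z|i)+z_i\sigma_{l-1}(z|i)=z_i\sigma_{l-1}(z|i)$ and $\sigma_{l+1}(z)=\sigma_{l+1}(z|i)+z_i\sigma_l(z|i)=\sigma_{l+1}(z|i)$. Since $z\in\Gamma_k$ and $l+1\le k$, both $\sigma_l(z)>0$ and $\sigma_{l+1}(z)>0$. The first forces $z_i>0$, and the second gives $\sigma_{l+1}(z|i)>0$. Now apply Newton's inequality to $z$ with $i$ removed, which is an $(n-1)$-vector (valid when $l\le n-2$): $E_l^2\ge E_{l-1}E_{l+1}$. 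The left side is $0$ while the right side is strictly positive, a contradiction. In the edge case $l=n-1$ we would have $k=n$, so all $z_j>0$ and $\sigma_{n-1}(z|i)>0$ trivially. This proves the claim.

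The main obstacle is the single-deletion step. In particular I must make sure the first-vanishing-point argument is rigorous: continuity along the segment gives a first time at which some index becomes zero while the smaller ones are still positive. This is the standard proof that $\partial\sigma_k/\partial x_i>0$ on $\Gamma_k$, and I could alternatively cite that fact, since $\sigma_{k-1}(x|i)=\partial\sigma_k/\partial x_i$, combined with the same argument for each $l<k$ using $\Gamma_k\subset\Gamma_{l+1}$.
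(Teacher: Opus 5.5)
The paper does not prove this lemma; it only refers to \cite{lin1994some} and \cite{li2020inclusion}. Your argument is a correct proof that uses only standard facts, so it is a genuinely independent route rather than a reconstruction of the paper's.

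The reduction to a single deletion is sound. Each deletion lowers the cone index by one, and $l\ge1$, $l+s\le k$ guarantees $k-s\ge1$ at the last step.

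The first-vanishing argument also closes correctly. At $z$ you have $\sigma_{l-1}(z|i)>0$ and $\sigma_l(z|i)=0$, and by \eqref{sum property} $\sigma_{l+1}(z|i)=\sigma_{l+1}(z)>0$. This contradicts Newton's inequality for $\hat z^i\in\mathbb{R}^{n-1}$, and the case $l=n-1$ is handled as you say. The argument works because Newton's inequality needs no sign or cone hypothesis. You can therefore apply it at a boundary point where all you know about $\hat z^i$ is this sign pattern. The observation $z_i>0$ is never used.

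Compared with the bare citation, your route makes the paper self-contained at the cost of a paragraph. It needs only Newton's inequality, which the paper already quotes on all of $\mathbb{R}^n$, and a path from $e=(1,\dots,1)$ to $y$ inside $\Gamma_k$.

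For that path, cite G\r{a}rding's convexity theorem directly rather than the concavity of $\sigma_k^{1/k}$ in Lemma \ref{Lemma2}. Concavity on $\Gamma_k$ presupposes that $\Gamma_k$ is convex; it does not imply it.

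You can also dispense with convexity entirely. For $y\in\Gamma_k$ and $t\ge0$,
\[
\sigma_j(y+te)=\sum_{m=0}^{j}C_{n-m}^{j-m}\,t^{j-m}\sigma_m(y)>0,\qquad 1\le j\le k,
\]
so the ray $y+te$ stays in $\Gamma_k$. Moreover $\sigma_l(y+te|i)>0$ for large $t$, since its leading term is $C_{n-1}^{l}t^{l}$. Running your first-vanishing argument as $t$ decreases to $0$ then gives a proof resting on Newton's inequality alone.
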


 For Lemma \ref{Lemma3}, one can refer to \cite{lin1994some}, and see \cite{li2020inclusion} for a detailed proof of this property. Finally, we prove a key inequality which will be used in the proof of Theorem \ref{Th1.4}, based on Lemma \ref{Lemma2} and Lemma \ref{Lemma3}.

 \begin{Lemma} \label{Lemma4}
      Let $2\leq m \leq n-1$ and $x=(x_1,...,x_n)\in\Gamma_m$, then we have
     \begin{equation} \label{2.10}
        \left( \prod_{i=1}^n E_{m-1}^\frac{1}{m-1}(\hat{x}^i) \right)^\frac{1}{n} \geq E_{m}^\frac{1}{m}(x),
     \end{equation}
     where $\hat{x}^i:=(x_1,...,x_{i-1},x_{i+1},...,x_n)\in\mathbb{R}^{n-1}$ denotes the $(n-1)$-dimensional vector obtained by deleting the $i$-th coordinate, and the equality holds if and only if $x_1=\cdots=x_n>0$.
 \end{Lemma}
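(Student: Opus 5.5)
The plan is to prove \eqref{2.10} by a variational argument in the spirit of Lemmas \ref{Lemma2.1} and \ref{Lemma2.2}. The inequality is scale-invariant, so it suffices to minimize a degree-zero logarithmic functional on a normalized slice of $\Gamma_m$ and show that the only minimizer is the diagonal point.

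\emph{Setup.} For $x\in\Gamma_m$, Lemma \ref{Lemma3} (with $s=1$, $l\le m-1$) gives $\sigma_{m-1}(x|i)>0$ for every $i$. Also $\sigma_{m-1}(\hat x^i)=\sigma_{m-1}(x|i)$. So every factor on the left of \eqref{2.10} is positive, and the inequality is equivalent to $\mathcal F(x)\ge 0$, where
\begin{equation*}
 \mathcal F(x):=\frac{1}{n(m-1)}\sum_{i=1}^n\log\frac{\sigma_{m-1}(x|i)}{C_{n-1}^{m-1}}-\frac1m\log\frac{\sigma_m(x)}{C_n^m}.
\end{equation*}
This function is homogeneous of degree $0$, symmetric, and vanishes at $x=(1,\dots,1)$. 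We therefore restrict to the slice $\{\sigma_1(x)=n\}\cap\Gamma_m$.

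\emph{Derivatives.} Using \eqref{sum property} one gets
\begin{equation*}
 \frac{\partial\sigma_m}{\partial x_j}=\sigma_{m-1}(x|j),\qquad \frac{\partial\sigma_{m-1}(x|i)}{\partial x_j}=\sigma_{m-2}(x|ij)\quad (j\ne i).
\end{equation*}
The aim is the analogue of Lemma \ref{Lemma2.1}: if $x_a=\min_l x_l<x_b=\max_l x_l$, then $\partial_a\mathcal F(x)<\partial_b\mathcal F(x)$. To get this, I would write $\partial_a\mathcal F-\partial_b\mathcal F$ as a multiple of $(x_a-x_b)$, using
\begin{equation*}
 \sigma_{m-1}(x|a)-\sigma_{m-1}(x|b)=(x_b-x_a)\,\sigma_{m-2}(x|ab),
\end{equation*}
and similarly $\sigma_{m-2}(x|ia)-\sigma_{m-2}(x|ib)=(x_b-x_a)\sigma_{m-3}(x|iab)$. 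After factoring out $(x_b-x_a)$, the sign of the remaining bracket should follow from positivity of the relevant $\sigma_l(x|\cdot)$ (Lemma \ref{Lemma3}) together with Newton's inequality \eqref{N1} applied to the reduced vectors $x|ab$. Those reduced vectors lie in $\Gamma_{m-2}$, so the Newton-type comparisons among $\sigma_{m-3},\sigma_{m-2},\sigma_{m-1}$ are available.

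\emph{Compactness and smoothing.} Fix $x^0$ in the slice. Its permutation hull $\mathcal H(x^0)$ lies in $\Gamma_m$ because $\Gamma_m$ is a convex symmetric cone. The hull is compact, convex and permutation-invariant, so $\mathcal F$ attains a minimum on it. Suppose a minimizer is not diagonal. Moving from it along the segment $x(\theta)=(1-\theta)x+\theta x'$, where $x'$ swaps the extreme coordinates, strictly decreases $\mathcal F$ by the derivative comparison above. This contradicts minimality, so the minimizer is the diagonal point, where $\mathcal F=0$. It follows that $\mathcal F(x^0)\ge0$, with equality only at $x_1=\cdots=x_n$, and on the slice this point is positive.

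\emph{Main obstacle and fallback.} The hard step is the sign of the bracket in the derivative comparison, which mixes the sums $\sum_{i\ne a,b}\sigma_{m-3}(x|iab)/\sigma_{m-1}(x|i)$ with the $\sigma_m$ term. This is where Newton's inequality on the reduced vectors has to be combined carefully, much as convexity of $h_{a,b}$ was used in Lemma \ref{Lemma2.1}. If this becomes unmanageable, I would try a more direct route based on the concavity and Euler identity of Lemma \ref{Lemma2}. Combining $\sum_i x_i\sigma_{m-1}(x|i)=m\sigma_m(x)$ with $\sum_i\sigma_{m-1}(x|i)=(n-m+1)\sigma_{m-1}(x)$ and Maclaurin's inequality \eqref{M1} for $x\in\Gamma_m$, I would try to bound a weighted geometric mean of the $\sigma_{m-1}(x|i)$ from below by $\sigma_m^{(m-1)/m}$.
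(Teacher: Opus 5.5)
Your main route depends entirely on the derivative comparison, and that step is not proved. Carrying out the factorization you describe gives $\partial_a\mathcal F-\partial_b\mathcal F=(x_b-x_a)\,B$ with
\[
B=\frac{1}{n(m-1)}\Bigl[\frac{\sigma_{m-2}(x|ab)^2}{\sigma_{m-1}(x|a)\,\sigma_{m-1}(x|b)}+\sum_{i\ne a,b}\frac{\sigma_{m-3}(x|iab)}{\sigma_{m-1}(x|i)}\Bigr]-\frac{\sigma_{m-2}(x|ab)}{m\,\sigma_m(x)}.
\]
A minimizer on $\mathcal H(x^0)$ can a priori be any non-diagonal point of $\Gamma_m$, so you need $B<0$ for the extreme pair at every such point. $B$ is a difference of positive quantities, so Lemma \ref{Lemma3} says nothing about its sign. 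Newton's inequality for the single vector $x|ab$ also does not control the terms $\sigma_{m-3}(x|iab)/\sigma_{m-1}(x|i)$ against $\sigma_{m-2}(x|ab)/\sigma_m(x)$, because their index sets vary with $i$. One can check $B<0$ at the diagonal, where $B=\frac{m-n}{n(n-1)^2}$, and by hand for $m=2$. For general $m$, however, the required statement is a Schur-type monotonicity property of $\mathcal F$ that goes beyond the inequality you are proving, and the proposal only asserts that it ``should follow.'' As written, the variational route does not establish the lemma.

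Your fallback names the right tools, and it is in fact the paper's route, but it misses the one idea that makes it work. With $f=E_m^{1/m}$, concavity and the Euler identity give, for all $y\in\Gamma_m$,
\[
f(y)\le\nabla f(x)\cdot y=\frac{1}{n\,f^{m-1}(x)}\sum_{i=1}^n E_{m-1}(\hat x^i)\,y_i .
\]
Your two identities amount to testing this with $y=x$, which gives a trivial equality, and with $y=(1,\dots,1)$. The latter gives only $f^{m-1}(x)\le E_{m-1}(x)=\frac1n\sum_i E_{m-1}(\hat x^i)$, an arithmetic-mean bound weaker than the geometric-mean bound \eqref{2.10}.

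The paper instead takes $y_i=1/E_{m-1}(\hat x^i)$. This is positive by Lemma \ref{Lemma3}, so $y\in K^+\subset\Gamma_m$. The right-hand side then equals $f^{1-m}(x)$, so $f^{m-1}(x)f(y)\le 1$. AM--GM over the $m$-fold products of the $y_i$ gives $f(y)\ge(\prod_i y_i)^{1/n}=(\prod_i E_{m-1}(\hat x^i))^{-1/n}$, and combining the two yields \eqref{2.10}.

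The equality case comes for free. Equality in AM--GM forces all $y_i$, hence all $\sigma_{m-1}(\hat x^i)$, to be equal. The identity $\sigma_{m-1}(\hat x^i)-\sigma_{m-1}(\hat x^j)=(x_j-x_i)\,\sigma_{m-2}(\hat x^{ij})$ with $\sigma_{m-2}(\hat x^{ij})>0$ then gives $x_i=x_j$. The choice of this test vector $y$ is the missing idea; Maclaurin's inequality for $x$ itself is not needed.
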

 \begin{proof}
     By Lemma \ref{Lemma2}, the function $f(x):= E_m^\frac{1}{m}(x)$ is concave on $\Gamma_m$. Thus by the basic property of concave function, for any $x,y\in\Gamma_m$, we have
     \begin{equation}
         f(y)\leq f(x) + \nabla f(x)\cdot(y-x) = f(x) + \sum_{i=1}^n\frac{\partial f}{\partial x_i}(x)(y_i-x_i).
     \end{equation}
     By property \eqref{Euler homogeneity}, we can obtain
     \begin{equation} \label{2.12}
         f(y)\leq \sum_{i=1}^n \frac{\partial f}{\partial x_i}(x)y_i.
     \end{equation}
     By direct calculation, we have
     \begin{equation}
         \frac{\partial f}{\partial x_i}(x)=\frac{1}{mC_n^m}\frac{\sigma_{m-1}(x|i)}{f^{m-1}(x)}=\frac{1}{n}\frac{E_{m-1}(\hat{x}^i)}{f^{m-1}(x)}.
     \end{equation}
     Then from \eqref{2.12}, we have
     \begin{equation} \label{2.14}
        f^{m-1}(x) f(y) \leq \frac{1}{n} \sum_{i=1}^n E_{m-1}(\hat{x}^i) y_i, \quad \forall x,y\in\Gamma_m.
     \end{equation}
     To obtain \eqref{2.10}, we need to normalize the right-hand side of \eqref{2.14}. By Lemma \ref{Lemma3}, for any $x\in\Gamma_m$, we have $E_{m-1}(\hat{x}^i) = \frac{\sigma_{m-1}(\hat{x}^i)}{C_{n-1}^{m-1}} >0$, so we can take $y:=(y_1,...,y_n)\in \Gamma_m$, where $y_i=\frac{1}{E_{m-1}(\hat{x}^i)}>0$ for any $1\leq i\leq n$, then from \eqref{2.14}, we obtain
     \begin{equation} \label{2.15}
         f^{m-1}(x) f(y) \leq 1, \quad \forall x\in\Gamma_m.
     \end{equation}
     On the other hand, by the AM-GM inequality, we have
     \begin{equation} \label{AM-GM}
       f^m(y) = \frac{\sigma_m(y)}{C_n^m} \geq \left( \prod_{1\leq i_1<\cdots<i_m\leq n} y_{i_1}\cdots y_{i_m} \right)^\frac{1}{C_n^m} = \left( \prod_{i=1}^ny_i^{C_{n-1}^{m-1}} \right)^\frac{1}{C_n^m} = \left( \prod_{i=1}^n y_i \right)^\frac{m}{n}.
     \end{equation}
     Then, we have
     \begin{equation} \label{2.17}
         f(y) \geq \left( \prod_{i=1}^n y_i \right)^\frac{1}{n} = \left( \prod_{i=1}^n E_{m-1}(\hat{x}^i) \right)^{-\frac{1}{n}}.
     \end{equation}
     Combining \eqref{2.15} and \eqref{2.17}, we can obtain
     \begin{equation}
         f(x) \leq \left( \prod_{i=1}^n E_{m-1}(\hat{x}^i) \right)^{\frac{1}{n(m-1)}}, \quad \forall x\in\Gamma_m,
     \end{equation}
     as desired.

     Finally, we discuss the condition for equality in \eqref{2.10}. If equality holds, from the above proof, we conclude that equality must hold in inequality \eqref{AM-GM}. Therefore, the products of any $m$ components of $y$ are equal, which leads to $y_1=\cdots=y_n$, then we can obtain $\sigma_{m-1}(\hat{x}^1)=\cdots=\sigma_{m-1}(\hat{x}^n)$.
    
Let $\hat{x}^{ij}$ denotes the $(n-2)$-dimensional vector obtained by deleting both the $i$-th and $j$-th coordinates. By property \eqref{sum property}, for any $1\leq i<j\leq n$, we have 
\[
\begin{aligned}
\sigma_{m-1}(\hat{x}^i) &= \sigma_{m-1}(\hat{x}^{ij}) + x_j\sigma_{m-2}(\hat{x}^{ij}), \\
\sigma_{m-1}(\hat{x}^j) &= \sigma_{m-1}(\hat{x}^{ij}) + x_i\sigma_{m-2}(\hat{x}^{ij}).
\end{aligned}
\]
Subtracting gives
\begin{equation} \label{Subtract}
\sigma_{m-1}(\hat{x}^i) - \sigma_{m-1}(\hat{x}^j) = (x_j-x_i)\sigma_{m-2}(\hat{x}^{ij}).   
\end{equation}
Since $\sigma_{m-1}(\hat{x}^i) = \sigma_{m-1}(\hat{x}^j)$ for any $1\leq i<j\leq n$, and by Lemma \ref{Lemma3}, we have $\sigma_{m-2}(\hat{x}^{ij}) > 0$, so we deduce that $x_i=x_j$ from \eqref{Subtract}, for any $1\leq i<j\leq n$. This proves the lemma.
\end{proof}

\vspace{3mm}

\section{Proofs of the main results} \label{Section 3}
In this section, we give the proofs of the main results. We divide all the proofs into three subsections, in which we give the proofs of Theorems \ref{Th1.3}, \ref{Th1.4}, and \ref{Th1.5} together with the related corollaries, respectively.

\subsection{Proofs of Theorem \ref{Th1.3} and related corollaries}
In this subsection, we first prove Theorem \ref{Th1.3} based on Lemma \ref{Lemma2.2}. Then, we give the proofs of Corollaries \ref{Co1.1}, \ref{Co1.2} and \ref{Co1.3}, respectively.

\begin{proof} [Proof of Theorem \ref{Th1.3}]
First we assume $x\in\mathcal P_{k-1}$. Fix a $(k+1)$-tuple $I=\{i_1,...,i_{k+1}\} \subset \{1,\ldots,n\}$ with $i_1<\cdots<i_{k+1}$, and let $q = k+1$, then $q\geq3$. Define
\[
  x^{I} := (x_1^{I},\ldots,x_q^{I}) \in \mathbb{R}^q,
\]
where $x_l^{I}=x_{i_l}/s_I,\, \forall 1\leq l\leq q$, and $s_I:=\sum_{i\in I} x_i$. Since $\mathcal{P}_{k-1}\subset\mathcal{P}_{k+1}$, we have $s_I>0$. Obviously, we have $\sum_{j=1}^q x_j^I = 1$, and for any $1\leq j<l\leq q$,
\[
 1-x_j^{I}-x_l^{I} = \frac{\sum_{i\in I\setminus\{i_j,i_l\}} x_i}{s_I}>0,
\]
since $x\in\mathcal{P}_{k-1}$. Thus $x^{I}\in\mathcal D_q$, and Lemma \ref{Lemma2.2} can be applied to $x^I$.
For each $q-2\leq r\leq q$, we let
\begin{equation}\label{ocal-mean}
 M_{r}^I(x) := M_r(s_I x^I) = \frac{\Pi_r^{\frac{1}{C_q^r}}(s_I x^I)}{r}.
\end{equation}
When $x\in\mathcal{P}_{k-1}$, we have $M_r^I(x)>0$ for every $q-2\leq r\leq q$. Therefore, by direct computation,
\begin{equation} \label{Log1}
\begin{aligned}
 \log M_{q-2}^I(x) &= \frac{1}{C_q^2}\sum_{1\le i<j\le q}
 \log\left( \frac{s_I(1-x_i^{I}-x_j^{I})}{q-2} \right) \\
 &= \log\left(\frac{s_I}{q-2}\right) + \frac{1}{C_q^2}\sum_{1\le i<j\le q} \log(1-x_i^{I}-x_j^{I}),
\end{aligned}
\end{equation}

\begin{equation} \label{Log2}
\begin{aligned}
 \log M_{q-1}^I(x) &= \frac{1}{q} \sum_{i=1}^q \log\left(\frac{s_I(1-x_i^{I})}{q-1}\right) \\
 &= \log\left(\frac{s_I}{q-1}\right) + \frac1q\sum_{i=1}^q\log(1-x_i^{I}),
\end{aligned}
\end{equation}
and
\begin{equation} \label{Log3}
 \log M_{q}^I(x) = \log\left(\frac{s_I}{q}\right).
\end{equation}
Substituting \eqref{Log1}--\eqref{Log3}, with $k=q-1$, gives
\begin{equation} \label{local N2}
\begin{aligned}
 &\quad\ 2\log M_{k}^I(x) - \log M_{k-1}^I(x) - \log M_{k+1}^I(x) \\
 &=2\log\left(\frac{s_I}{q-1}\right) - \log\left(\frac{s_I}{q-2}\right) - \log\left(\frac{s_I}{q}\right) + \frac2q\sum_{i=1}^q\log(1-x_i^{I})
 -\frac{1}{C_q^2}\sum_{1\le i<j\le q}\log(1-x_i^{I}-x_j^{I}) \\
 &=\log\left(\frac{q(q-2)}{(q-1)^2}\right)
 +\frac{\mathcal L_q(x^{I})}{C_q^2} \\
 &=\frac{\mathcal L_q(x^{I})
 -\mathcal L_q(x^{q})}{C_q^2}\ge0,
\end{aligned}
\end{equation}
where Lemma \ref{Lemma2.2} was used, and the equality in \eqref{local N2} holds if and only if $x_i^{I}=1/q$ for all $1\leq i\leq q$, or equivalently, $x_{i_1}=\cdots=x_{i_q}=s_I/q$.

For any fixed $k-1\leq r\leq k+1$. Summing $\log M_r^I(x)$ over all $I$ with $I\subset\{1,...,n\}$ and $|I|=k+1$, where $|I|$ denotes the cardinality of $I$, we can obtain  
\begin{equation} \label{local-global}
\begin{aligned}
 \frac{1}{C_n^{k+1}} \sum_{\substack{I\subset\{1,\ldots,n\}\\|I|=k+1}}
 \log M_{r}^I(x) &= \frac1{C_{n}^{k+1}C_{k+1}^r} \sum_{\substack{I\subset\{1,\ldots,n\}\\|I|=k+1}} \sum_{\substack{S\subset I\\|S|=r}}
 \log\left(\frac1r\sum_{i\in S} x_i\right) \\
 &= \frac{C_{n-r}^{k+1-r}}{C_n^{k+1}C_{k+1}^r}
 \sum_{\substack{S\subset\{1,\ldots,n\}\\|S|=r}}
 \log\left(\frac1r\sum_{i\in S} x_i\right) \\
 &= \frac1{C_n^r} \sum_{\substack{S\subset\{1,\ldots,n\}\\|S|=r}}
 \log\left(\frac1r\sum_{i\in S} x_i\right) \\
 &=\log M_{r}(x).
\end{aligned}
\end{equation}
Thus, by \eqref{local N2}, we have
\begin{equation} \label{N2 global}
\begin{aligned}
 &\quad\ 2\log M_{k}(x) -\log M_{k-1}(x) -\log M_{k+1}(x) \\
 & =\frac{1}{C_n^{k+1}}
 \sum_{\substack{I\subset\{1,\ldots,n\}\\|I|=k+1}}
 \bigl(2\log M_{k}^I(x) -\log M_{k-1}^I(x) -\log M_{k+1}^I(x)\bigr) \ge0,
\end{aligned}
\end{equation}
which proves inequality \eqref{N2} on $\mathcal{P}_{k-1}$. 

For $x\in\overline{\mathcal P}_{k-1}$. Let $x_{\varepsilon}:=(x_1+\varepsilon,\ldots, x_n+\varepsilon)$, where $\varepsilon>0$ sufficiently small, then $x_\varepsilon\in\mathcal P_{k-1}$. Hence by previous result, we have  
\begin{equation} \label{N2-epsilon}
  M_{k}^2(x_\varepsilon) \ge M_{k-1}(x_{\varepsilon}) M_{k+1}(x_{\varepsilon}).
\end{equation}
Clearly, $M_{r}$ is continuous for any $k-1\leq r\leq k+1$, and $x_\varepsilon\to x$ as $\varepsilon\to0$, so let $\varepsilon\to 0$ in \eqref{N2-epsilon}, we can obtain the desired result.

Finally, we discuss when equality holds in inequality \eqref{N2}. If $x\in\mathcal{P}_{k-1}$, combining \eqref{local N2}, \eqref{N2 global} and the equality condition in \eqref{local N2}, we can easily obtain that equality holds in \eqref{N2} if and only if $x_1=\cdots=x_n>0$. If $x\in\partial\mathcal{P}_{k-1}$, then there exist at least $k-1$ components of $x$ whose sum is $0$, so $M_{k-1}(x)=0$. Equality is therefore equivalent to $M_{k}(x)=0$. When $M_k(x)=0$, there exists a $k$-tuple $I\subset\{1,...,n\}$ with $\sum_{i\in I}x_i=0$. For each $i\in I$, we have
\[
 -x_i = \sum_{j\in I}x_j-x_i = \sum_{j\in I\setminus\{i\}} x_j \ge 0.
\]
Thus, we can deduce that $x_i=0$ for any $i\in I$. Moreover, we have $x_i\geq0$ for any $i\notin I$. When $k=2$, this is obvious; for $k\geq3$ and $i\notin I$, we can choose a $(k-2)$-tuple $S\subset I$, then we have
\begin{equation*}
    x_i = x_i + \sum_{j\in S} x_j = \sum_{j\in S\cup\{i\}} x_j \ge 0,
\end{equation*}
as desired. Conversely, if $x$ contains $k$ zeros, then $M_k(x)=0$ clearly.
\end{proof}

\begin{remark}
We note that the inequality \eqref{N2} is a Newton-type inequality, but its range of validity differs from that of the classical Newton's inequality. The classical Newton's inequality holds on all of $\mathbb{R}^n$, mainly because of the real-rootedness property of polynomials. Specifically, for any $x=(x_1,...,x_n)\in\mathbb{R}^n$, the elementary symmetric means can arise as normalized coefficients of the degree $n$ polynomial $p(t)$ with roots $x_1,...,x_n$:
\[
   p(t) = \prod_{i=1}^n (t-x_i) = \sum_{k=0}^n (-1)^k C_n^k E_k(x) t^{n-k}.
\]
However, the Maclaurin means $M_k(x)$, $1\leq k\leq n$, clearly lacks such a structure, so we adopt a different approach and prove that inequality \eqref{N2} holds on $\overline{\mathcal{P}}_{k-1}$, but we cannot expect inequality \eqref{N2} holds on all of $\mathbb{R}^n$. In general, restricting to $\overline{\mathcal{P}}_{k-1}$ is natural and necessary, as it ensures that the Maclaurin means are all real-valued. In fact, the inequality \eqref{N2} can holds only on all of $\mathbb{R}^3$, see Remark \ref{remark 3.2}. However, for $n\geq 4$, even if the Maclaurin means are all real-valued, inequality \eqref{N2} generally no longer holds, see the counterexample in Remark \eqref{remark 3.3}. 
\end{remark}

\begin{remark} \label{remark 3.2}
For $n=3$ and $k=2$, the Newton-type inequality \eqref{N2} holds on all of $\mathbb{R}^3$. In this case, the inequality is equivalent to
\begin{equation} \label{N2 n=3}
27\bigl[(x+y)(x+z)(y+z)\bigr]^2
\ge64xyz(x+y+z)^3,\quad \forall x,y,z\in\mathbb{R}.
\end{equation}
In view of Theorem \ref{Th1.3}, we already know that inequality \eqref{N2 n=3} holds for $x,y,z\geq0$. Write
\[
p=x+y+z,\quad q=xy+xz+yz,\quad r=xyz,\quad S=(x+y)(x+z)(y+z)=pq-r.
\]
If $pr\leq0$, then inequality \eqref{N2 n=3} holds obviously. Thus, we suppose $pr>0$. Because replacing $(x,y,z)$ by $(-x,-y,-z)$ in inequality \eqref{N2 n=3} leaves both sides unchanged, it suffices to consider the case where $p>0$, $r>0$, and among $x,y,z$, two are negative and one is positive. Without loss of generality, we write $(x,y,z)=(a,-b,-c)$, with $a,b,c>0$. Since $p>0$, we have $a > b+c$, and
\[
q = bc-a(b+c) < bc-(b+c)^2 < 0.
\]
Thus $pq<0$ and $r>0$, so $(pq-r)^2>(pq)^2$. Moreover, by the classical Newton's inequality \eqref{N1}, we have $q^2\geq3pr$. Then we have
\begin{equation*}
S^2 > p^2q^2 \ge 3p^3r > \frac{64}{27} p^3r,
\end{equation*}
as desired.
\end{remark}

\begin{remark} \label{remark 3.3}
The Newton-type inequality \eqref{N2} generally does not hold on all of $\mathbb{R}^n$ for $n\geq4$ and $2\leq k \leq n-1$, even if the Maclaurin means are all real-valued. To see this, we will show that for every $n\ge4$ and $2\le k\le n-1$, there exists some $x^\ast\in\mathbb{R}^n$ such that
\begin{equation*}
M_{k}(x^\ast) = 0,\quad M_{k-1}(x^\ast)>0,\quad M_{k+1}(x^\ast)>0.
\end{equation*}
Thus, $x^\ast$ does not satisfy the inequality \eqref{N2}. 

For $k=2$, we can take
\begin{equation*}
x^\ast = (1,1,-1,-1,\underbrace{3,...,3}_{n-4}).
\end{equation*}
We note that the above $x^\ast\in\overline{\mathcal{P}}_4$, but $x^\ast\notin\overline{\mathcal{P}}_1$. Clearly, by simple calculation, we have $M_2(x^\ast)=0$, $M_1(x^\ast)>0$ and $M_3(x^\ast)>0$. 

For $k\ge3$, let $k_0=\frac{k-2}{2}>0$, and we can take
\begin{equation*}
x^\ast = (k_0,k_0,\underbrace{-1,...,-1}_{k-2},
\underbrace{k,...,k}_{n-k}).
\end{equation*}
We note that the above $x^\ast\in\overline{\mathcal{P}}_k$, but $x^\ast\notin\overline{\mathcal{P}}_{k-1}$. By direct computation, we can easily get $M_k(x^\ast)=0$, $M_{k-1}(x^\ast)>0$ and $M_{k+1}(x^\ast)>0$.

\end{remark}

Next, we prove Corollary \ref{Co1.1} by applying Theorem \ref{Th1.3}.
\begin{proof} [Proof of Corollary \ref{Co1.1}] 

We only consider $x\in\mathcal P_{p}$. Otherwise, as in the proof of Theorem \ref{Th1.3}, we can use a limiting argument. In fact, the Newton-type inequality \eqref{N2} implies the concavity of $\log M_k(x)$ with respect to $k$. Thus by the basic property of concave function, for any $1\leq p<r<s\leq n$ and $x\in\mathcal{P}_p$, we have
\[
 \frac{\log M_{r}(x)-\log M_{p}(x)}{r-p}
 \ge
 \frac{\log M_{s}(x)-\log M_{r}(x)}{s-r},
\]
which leads to
\[
 (s-p)\log M_{r}(x) \ge (s-r)\log M_{p}(x) + (r-p)\log M_{s}(x).
\]
Therefore, we can obtain inequality \eqref{Interpolation} by a simple transformation.
\end{proof}

Below, we give the proof of Corollary \ref{Co1.2} based on Theorem \ref{Th1.3}.

\begin{proof} [Proof of Corollary \ref{Co1.2}]
  In fact, inequality \eqref{N2} is stronger than inequality \eqref{M2}, so inequality \eqref{M2} can be derived from Newton-type inequality \eqref{N2} easily similar to the classical case. To prove inequality \eqref{M2}, it suffices to prove that for any $1\leq k\leq n-1$ and $x\in\overline{\mathcal{P}}_{k}$, the following inequality holds:
  \begin{equation} \label{M2 k}
      M_{k+1}(x) \geq M_k(x).
  \end{equation}
To see this, we can use backward induction on $k$. Firstly, we prove \eqref{M2 k} holds for $k = n-1$ on $\overline{\mathcal{P}}_{n-1}$. As before, we only let $x=(x_1,...,x_n)\in\mathcal{P}_{n-1}$, then by AM-GM inequality, we have 
\begin{equation}
    \Pi^{\frac{1}{n}}_{n-1}(x) = \left( \prod_{i=1}^n \left( \Pi_n(x)-x_i \right) \right)^\frac{1}{n} \leq \frac{\sum_{i=1}^n \left( \Pi_n(x)-x_i \right) }{n} = \frac{n-1}{n} \Pi_n(x),
\end{equation}
as desired.

Next, we assume that \eqref{M2 k} holds for $2\leq k\leq n-1$ on $\overline{\mathcal{P}}_{k}$, then we need to prove $ M_k(x) \geq M_{k-1}(x)$ on $\overline{\mathcal{P}}_{k-1}$. Without loss of generality, we only suppose $x=(x_1,...,x_n)\in\mathcal{P}_{k-1}$, then we have $x\in\mathcal{P}_{k}$, thus by the inequalities \eqref{N2} and \eqref{M2 k}, we obtain
\begin{equation}
    M_k^2(x) \geq M_{k-1}(x) M_{k+1}(x) \geq M_{k-1}(x) M_k(x),
\end{equation}
which implies that $M_k(x) \geq M_{k-1}(x)$ on $\mathcal{P}_{k-1}$, since $M_k(x)>0$. Thus, we complete the proof of inequality \eqref{M2}.

Finally, it is not difficult to obtain the equality condition of inequality \eqref{M2}. If $x\in\mathcal{P}_j$, then by the above proof, the equality condition follows directly from the equality condition of Newton-type inequality \eqref{N2}. If $x\in\partial\mathcal{P}_j$, then the discussion is similar to that of the Newton-type inequality \eqref{N2}, see the last part of the proof of Theorem \ref{Th1.3}.
\end{proof}

\begin{remark}
    In fact, in \cite{carlson1971mixed}, inequality \eqref{M2} has already been proved on $\overline{K^+}$, and we find that the proof in \cite{carlson1971mixed} is sufficient to derive that inequality \eqref{M2} also holds on $\overline{\mathcal{P}}_j$. Nevertheless, we still give a completely different proof from that in \cite{carlson1971mixed}, which is based on Newton-type inequality \eqref{N2} and backward induction.
\end{remark}

\begin{remark}
    Furthermore, we find that inequality \eqref{M2} can also be obtained directly from a class of Muirhead-type inequality. Given $\alpha\in \mathcal{H}(\beta)$, where $\alpha=(\alpha_1,...,\alpha_n)\in\mathbb{R}^n$, $\beta=(\beta_1,...,\beta_n)\in\mathbb{R}^n$, and $\mathcal{H}(\beta)$ denotes the convex hull of all vectors obtained by permuting the coordinates of $\beta=(\beta_1,...,\beta_n)$. The classical Muirhead's inequality asserts that for any $x=(x_1,...,x_n)\in K^+$, one has
\begin{equation} \label{M-H1}
    \sum_{\sigma\in S_n} x_{\sigma(1)}^{\alpha_1}\cdots x_{\sigma(n)}^{\alpha_n} \leq \sum_{\sigma\in S_n} x_{\sigma(1)}^{\beta_1}\cdots x_{\sigma(n)}^{\beta_n},
\end{equation}
    where $S_n$ denotes the symmetric group on $n$ letters. For the proof of Muirhead's inequality \eqref{M-H1}, one can refer to \cite{steele2004cauchy}. 
    
    In fact, we can easily derive a similar Muirhead-type inequality, which can be viewed as the counterpart of \eqref{M-H1}. Let $\alpha\in \mathcal{H}(\beta)$, where $\alpha=(\alpha_1,...,\alpha_n)\in\mathbb{R}^n$ and $\beta=(\beta_1,...,\beta_n)\in\mathbb{R}^n$, then for any $x=(x_1,...,x_n)\in\mathbb{R}^n$ satisfying $\sum_{i=1}^n\beta_{\tau(i)}x_{\sigma(i)}\geq0$ for any $\sigma,\tau\in S_n$, the following Muirhead-type inequality holds:
    \begin{equation} \label{M-H2}
        \prod_{\sigma\in S_n} \left(\alpha_1 x_{\sigma(1)}+\cdots+ \alpha_n x_{\sigma(n)}\right) \geq \prod_{\sigma\in S_n} \left(\beta_1 x_{\sigma(1)}+\cdots+ \beta_n x_{\sigma(n)}\right).
    \end{equation}
The proof of inequality \eqref{M-H2} is similar to that of inequality \eqref{M-H1}, so we omit it here, the interested reader may refer to \cite{steele2004cauchy}, where the proof of inequality \eqref{M-H2} is not difficult to give.

As mentioned earlier, the Muirhead-type inequality \eqref{M-H2} directly implies inequality \eqref{M2}. To see this, we take 
\begin{equation*}
   \alpha=(\underbrace{j,...,j}_{l},\underbrace{0,...,0}_{n-l}) \quad \text{and} \quad \beta=(\underbrace{l,...,l}_{j},\underbrace{0,...,0}_{n-j}),
\end{equation*}
where $1\leq j<l\leq n$. Clearly, we have $\alpha\prec\beta$, which denotes $\alpha$ is majorized by $\beta$. For the detailed definition of majorization, one can see \cite{steele2004cauchy}. In fact, for any $n$-tuples $\alpha$ and $\beta$, we have 
$$
\alpha\prec\beta \iff \alpha\in \mathcal{H}(\beta),
$$
also see \cite{steele2004cauchy} for the proof. Thus, for any $x=(x_1,...,x_n)\in\overline{\mathcal{P}}_j$, from the Muirhead-type inequality \eqref{M-H2}, we can obtain
\begin{equation}
    j^{n!} \Pi_{l}^{l!(n-l)!}(x) \geq l^{n!} \Pi_{j}^{j!(n-j)!}(x),
\end{equation}
then taking the $n!$-th root of both sides yields the desired inequality \eqref{M2}.
\end{remark}

\begin{remark}
    The classical Muirhead's inequality \eqref{M-H1} does not directly imply the classical Maclaurin's inequality \eqref{M1}, while the dual Muirhead's inequality \eqref{M-H2} directly implies the Maclaurin-type inequality \eqref{M2}. This difference seems to be interesting.
\end{remark}

Finally, we present the proof of Corollary \ref{Co1.3} based on Corollary \ref{Co1.2}.

\begin{proof} [Proof of Corollary \ref{Co1.3}]
    In fact, the proof of this result is direct due to Corollary \ref{Co1.2}. Since $\sum_{i=1}^n r_i = m$, then we let $y=(y_1,...,y_m)$, which is comprised of $r_i$ copies of $x_i$, for $1\leq i\leq n$. Then we note that the following combinatorial identity holds:
    \begin{equation}
      \Pi_l(y) = \prod_{1\leq i_1<\cdots<i_{l}\leq m}(y_{i_1}+\cdots+y_{i_l}) = \prod_{(l_1,...,l_n)\in\mathcal{S}_{n,l}} \left( \sum_{i=1}^n l_ix_i \right)^{\prod_{i=1}^n C_{r_i}^{l_i}}.
    \end{equation}
    Thus, by Corollary \ref{Co1.2}, we obtain the desired inequality \eqref{wighted M2}, and the equality holds if and only if $y_1=\cdots=y_m > 0$ or $y$ contains at least $l$ zeros, the former case is equivalent to $x_1=\cdots=x_n > 0$; and the latter case is equivalent to $\sum_{i=1}^n r_i\text{\large$\chi$}_{\{0\}}(x_i)\geq l$.
\end{proof}

\begin{remark}
  Unlike Corollary \ref{Co1.2}, we require $x=(x_1,...,x_n)\in\overline{K^+}$ in Corollary \ref{Co1.3}. This is because, in general, $x=(x_1,...,x_n)\in\overline{\mathcal{P}}_j$ does not ensure $y=(y_1,...,y_m)\in\overline{\mathcal{P}}_j$ for $j>1$, and therefore Corollary \ref{Co1.2} does not apply. For instance, let $n=3$, $(w_1, w_2, w_3)=(\frac{1}{2}, \frac{1}{3}, \frac{1}{6})$, $m=6$, i.e., $r=(3, 2, 1)$, for $j=2$ and $x=(-1, 1, 2)\in\overline{\mathcal{P}}_2$, then $y=(-1, -1, -1, 1, 1, 2)\notin\overline{\mathcal{P}}_2$.
\end{remark}

\begin{remark}
The inequalities \eqref{wighted M2} are another interpolating sequence of inequalities between the weighted arithmetic and geometric means, since
$$
P_1^{\frac{1}{C_{m}^1}}(x,r) = \prod_{i=1}^n {x_i}^{w_i} \quad \text{and} \quad \frac{P_m^{\frac{1}{C_{m}^m}}(x,r)}{m} = \sum_{i=1}^n w_ix_i.
$$
Moreover, if we let $w_i=\frac{1}{n}$ for any $1\leq i\leq n$ and $m=n$, then inequality \eqref{wighted M2} reduces to inequality \eqref{M2}. Thus, Corollary \ref{Co1.3} can be regarded as the counterpart of those in \cite{favaro2018general}.
\end{remark}

\subsection{Proofs of Theorem \ref{Th1.4} and related corollaries} 
In this subsection, we mainly give the proof of Theorem \ref{Th1.4}. The proofs of Corollary \ref{Co1.4} and Corollary \ref{Co1.6} are obvious by combining the classical Maclaurin's inequality \eqref{M1}, Corollary \ref{Co1.2} and Theorem \ref{Th1.4}, so we omit them here.

\begin{proof} [Proof of Theorem \ref{Th1.4}]
 For $k=1$, the result is trivial. Let $2\leq k\leq n$, we shall prove inequality \eqref{M3} by induction on the number of variables $n\geq k$, where $k\geq2$ is fixed. As before, we only suppose $x\in\Gamma_{n-k+1}$, and we let $m:=n-k+1$. When $n=k$, we have $m=1$, and 
 \[
   M_k(x) = \frac{x_1+\cdots+x_k}{k} = E_1(x),
 \]
 so inequality \eqref{M3} holds for $n=k$. Let $n>k$, we have $2\leq m\leq n-1$, and we assume that the inequality \eqref{M3} holds for $n-1$ variables. If $x=(x_1,...,x_n)\in \Gamma_m$, by Lemma \ref{Lemma3}, we have $\hat{x}^i\in\Gamma_{m-1}\subset\mathcal{P}_k$ for any $1\leq i\leq n$. Since $(n-1)-k+1=m-1$, the induction hypothesis yields
 \begin{equation} \label{induction}
      M_{k}(\hat{x}^i)\ge E_{m-1}^\frac{1}{m-1}(\hat{x}^i), \quad \forall 1\leq i\leq n.
 \end{equation}
 Moreover, analogous to \eqref{local-global}, we can obtain
\begin{equation} \label{identity-M_k}
M_{k}(x)=\left(\prod_{i=1}^n M_{k}(\hat{x}^i)\right)^\frac{1}{n}.
\end{equation}
Then, combining \eqref{induction}, \eqref{identity-M_k}, and by Lemma \ref{Lemma4}, we have
\begin{equation}
M_{k}(x) = \left(\prod_{i=1}^n M_{k}(\hat{x}^i)\right)^\frac{1}{n} \ge \left( \prod_{i=1}^n E_{m-1}^\frac{1}{m-1}(\hat{x}^i) \right)^\frac{1}{n} \ge E_{m}^\frac{1}{m}(x).
\end{equation}
Thus, the inequality \eqref{M3} also holds for $n$ variables, and we complete the proof of inequality \eqref{M3}.

Finally, we discuss the equality condition of inequality \eqref{M3}. When $k=1$ or $k=n$, both sides of \eqref{M3} coincide. Henceforth, we assume that \(2\le k\le n-1\), so that \(2\le m\le n-1\). If $x\in\Gamma_m$, and the equality holds in \eqref{M3}. From the above proof, since we employed inequality \eqref{2.10}, the equality also holds in \eqref{2.10}, then by Lemma \ref{Lemma4}, we can obtain $x_1=\cdots=x_n > 0$. If $x\in\partial\Gamma_m$, then there exists some $1\leq j\leq m$, such that $\sigma_{j}(x)=0$. If $j\neq m$, then by the Maclaurin's inequality \eqref{M1}, we always have $\sigma_m(x)=0$. Equality is therefore equivalent to $M_{k}(x)=0$. When $M_k(x)=0$, there exists a $k$-tuple $I\subset\{1,...,n\}$ with $\sum_{i\in I}x_i=0$. Since $k\leq n-1$, we can choose an index $1\leq l\leq n$ with $l\notin I$. Let $y$ denote the $(k+1)$-dimensional vector formed by keeping only the coordinates of $x$ corresponding to the indices in $I\cup\{l\}$. If $m=2$, then $y = x$, and $\sigma_2(y)=0$. If $2<m\leq n-1$, since $n-(k+1)=m-2$, by Lemma \ref{Lemma3} and a limiting argument, we can obtain $\sigma_2(y)\geq 0$. On the other hand, by direct calculation, we have
\begin{equation}
\begin{aligned}
    0\leq 2\sigma_2(y) &= 2\sum_{\substack{i,j\in I \\ i<j}} x_ix_j + 2x_l \sum_{i\in I} x_i \\
    &= 2\sum_{\substack{i,j\in I \\ i<j}} x_ix_j \\
    &= \left(\sum_{i\in I}x_i\right)^2 - \sum_{i\in I} x_i^2 \\
    &= - \sum_{i\in I} x_i^2 \leq 0.
\end{aligned}
\end{equation}
Thus, we have $\sum_{i\in I}x_i^2=0$, which leads to $x_i=0$ for any $i\in I$. Moreover, we have $x_i\geq 0$ for any $i\notin I$, this is because $\overline{\Gamma}_m\subset \overline{\mathcal{P}}_k$, then the result follows analogous to the discussion of the Newton-type inequality \eqref{N2}, see the last part of the proof of Theorem \ref{Th1.3}. Conversely, if $x$ contains $k$ zeros, then $M_k(x)=0$ clearly.
\end{proof}

\begin{remark}
    For $n=3$ and $k=2$, there is another elegant proof of the inequality \eqref{M3} based on a trigonometric transformation, which can be found in \cite{sedrakyan2018algebraic}, see Problem 6.8. Although \cite{sedrakyan2018algebraic} proves the inequality only on $\overline{K^+}$, the method used there is sufficient to prove it on $\overline{\Gamma}_2$. To see this, let $x=(x_1,x_2,x_3)\in \Gamma_2$, without loss of generality, we can assume that $\sigma_2(x)=1$, since inequality \eqref{M3} is homogeneous. Since $\Gamma_2 \subset \mathcal{P}_2$ in $\mathbb{R}^3$ (see \cite{li2020inclusion}), we conclude that $x$ contains at least two positive coordinates. Because inequality \eqref{M3} is symmetric, without loss of generality, we may assume that $x_1, x_2>0$. Let $x_1 = \tan\alpha$, $x_2=\tan \beta$, $x_3=\tan\gamma$, where $\alpha,\beta\in(0,\frac{\pi}{2})$ and $\gamma\in(-\frac{\pi}{2},\frac{\pi}{2})$. From $\sigma_2(x)=1$, it follows that 
    $$
    \tan(\alpha+\beta) = \frac{\tan\alpha + \tan\beta}{1-\tan\alpha \tan\beta} = \frac{1}{\tan\gamma} = \tan(\frac{\pi}{2}-\gamma).
    $$
    Thus, we obtain $\alpha+\beta+\gamma = \frac{\pi}{2} + k\pi$, where $k\in\mathbb{Z}$. Since $\alpha+\beta+\gamma\in(-\frac{\pi}{2},\frac{3\pi}{2})$, we obtain $\alpha+\beta+\gamma=\frac{\pi}{2}$, which is the key point. Then the rest of the proof is the same as that in \cite{sedrakyan2018algebraic}, and is therefore omitted. In fact, the above elegant relation exists only on $\mathbb{R}^3$, thus the proof is available only on $\mathbb{R}^3$. This is also why, in fully nonlinear partial differential equations, the $2$-Hessian equation $\sigma_2(\lambda)=1$ is equivalent to the special Lagrangian equation $\sum_{i=1}^3 \arctan \lambda_i = \frac{\pi}{2}$ on $\mathbb{R}^3$, where $\lambda=(\lambda_1,\lambda_2,\lambda_3)$ denotes the eigenvalues of the Hessian $D^2u$, whereas no such analogous equivalence exists in higher-dimensional spaces. 
\end{remark}

\subsection{Proofs of Theorem \ref{Th1.5} and related corollaries}
In this subsection, we give the proof of Theorem \ref{Th1.5} and Corollary \ref{Co1.5}. The proof of Corollary \ref{Co1.7} is obvious by combining the classical Maclaurin's inequality \eqref{M1}, Corollary \ref{Co1.2} and Theorems \ref{Th1.4}, \ref{Th1.5}, so we omit it here.

\begin{proof} [Proof of Theorem \ref{Th1.5}]
    We divide the proof into two parts. 

    \vspace{2mm}
    
    \textbf{Case 1.} $n=kr$, $1<k,r<n$. In this case, we select $r$ numbers from the $kr$ numbers $x_1,...,x_{kr}$ and sum them, then select another $r$ numbers from the remaining $(k-1)r$ numbers and sum them, and continue this process. Ultimately, we obtain $k$ sums. Multiplying these $k$ sums together yields a total of $m := \prod_{i=0}^{k-1} C_{(k-i)r}^r$ such products, denoted by $y_1,...,y_m$.
    Since $x\in\overline{\mathcal{P}}_r$, we have $y_i\geq0$ for any $1\leq i\leq m$.
    
    By direct calculation, we have 
    \begin{equation} \label{S}
        y_1+ \cdots + y_m = k! \prod_{i=0}^{k-1} C_{(k-i)(r-1)}^{r-1} \sigma_k(x) \geq 0, 
    \end{equation}
    since for any given $x_{i_1},...,x_{i_k}$, where $1\leq i_1< \cdots <i_k\leq kr$, there are $k! \prod_{i=0}^{k-1} C_{(k-i)(r-1)}^{r-1}$ occurrences of $x_{i_1}\cdots x_{i_k}$ in the expansion of $(y_1+\cdots+y_m)$.

    Furthermore, we have
    \begin{equation} \label{P}
        y_1\cdots y_m = \Pi_r^{\frac{km}{C_{kr}^r}}(x),
    \end{equation}
    since for any given $x_{i_1},...,x_{i_r}$, where $1\leq i_1<\cdots <i_r\leq kr$, $y_1,...,y_m$ together contain $C_k^1\cdot\prod_{i=1}^{k-1} C_{(k-i)r}^r$ copies of $(x_{i_1} +\cdots+ x_{i_r})$. 
    
    Thus, by the AM-GM inequality, we can obtain
    \begin{equation}
      \frac{ k! \prod_{i=0}^{k-1} C_{(k-i)(r-1)}^{r-1} }{m} \sigma_k(x) = \frac{y_1 + \cdots +y_m}{m} \geq \sqrt[m]{y_1\cdots y_m} = \Pi_r^{\frac{k}{C_{kr}^r}}(x).
    \end{equation}
    We can verify that 
    \begin{equation} \label{2.45}
        m \cdot r^k = k! \prod_{i=0}^{k-1} C_{(k-i)(r-1)}^{r-1} \cdot C_{kr}^k.
    \end{equation}
    In fact, \eqref{2.45} is the number of terms in the expansion of $(y_1+\cdots+y_m)$. Then we have
    \begin{equation}
        r^k \frac{\sigma_k(x)}{C_{kr}^k} \geq \Pi_r^{\frac{k}{C_{kr}^r}}(x),
    \end{equation}
    which implies inequality \eqref{M4}, as desired.

    Finally, we discuss the equality condition of inequality \eqref{M4}. For convenience, we define $s_I(x):=\sum_{i\in I}x_i$ for any $x\in\mathbb{R}^n$ and given $I\subset\{1,...,n\}$. With this notation, from the above proof, we can obtain that equality holds in inequality \eqref{M4} if and only if all products of the form $\prod_{i=1}^k s_{I_i}(x)$
    are equal, for any $k$ pairwise disjoint $r$-tuples $I_1,...,I_k\subset\{1,...,n\}$. Below, based on this, we present the more refined equality condition stated in Theorem \ref{Th1.5}.

    If $x\in\mathcal{P}_r$, we show that the equality in inequality \eqref{M4} holds if and only if, after a coordinate permutation,
    \begin{equation} \label{ac}
    x=(a,\underbrace{c,\dots,c}_{kr-1}),\quad c>0,\ a>-(r-1)c.
    \end{equation}
   If $x$ has the above form, then the equality is trivial. Conversely, we suppose the equality holds. If all coordinates of $x$ are equal, the conclusion is also trivial. Otherwise, we can take \(a:=x_i\) and \(b:=x_j\) for some $i\ne j$, where \(a\ne b\). Among the remaining \(n-2\) indices $\{1,...,n\}\setminus\{i,j\}$, we can select disjoint $(r-1)$-tuples \(I,J\subset\{1,...,n\}\setminus\{i,j\}\) . Since $(n-2)-2(r-1)=(k-2)r$. Thus, if $k\geq3$, we can choose $k-2$ pairwise disjoint $r$-tuples $L_1,...,L_{k-2}\subset\{1,...,n\}\setminus(I\cup J\cup\{i,j\})$, and we have
  \[
   s_{I\cup\{i\}}(x) s_{J\cup\{j\}}(x) \prod_{i=1}^{k-2} s_{L_i}(x) = s_{I\cup\{j\}}(x) s_{J\cup\{i\}}(x) \prod_{i=1}^{k-2} s_{L_i}(x).
  \] 
   After cancelling the common factors, we obtain
 \begin{equation} \label{ab}
  (a+\alpha)(b+\beta)=(b+\alpha)(a+\beta),  
\end{equation}
where $\alpha:=s_I(x)$ and $\beta:=s_J(x)$. If $k=2$, then we can obtain \eqref{ab} directly. Therefore, from \eqref{ab}, we obtain
\[
(a-b)(\alpha-\beta)=0.
\]
Since \(a\ne b\), we get 
\begin{equation} \label{alpha = beta}
    \alpha=\beta.
\end{equation}
Next, we claim that all coordinates of $x$ except \(x_i\) and $x_j$ are equal, with common value denoted by \(c\). If $k=2$, then the total number of remaining indices is $2(r-1)$, and by \eqref{alpha = beta}, we can obtain 
\[
s_I(x)=\frac{1}{2}\sum_{\substack{l=1 \\ l\neq i,j}}^n x_l,  
\] 
for every $(r-1)$-tuple $I\subset\{1,...,n\}\setminus\{i,j\}$ . Then we can easily obtain the desired result. If $k\ge3$, then from \eqref{alpha = beta}, we can easily obtain that $s_I(x)=s_J(x)$ for any $(r-1)$-tuples $I,J\subset\{1,...,n\}\setminus\{i,j\}$ differing in one index, i.e., \(|I\cap J|=r-2\), which also yields the desired result. This proves the claim. Thus, by the equality condition, we have
\[
    (rc)^{k-2} \left( a+(r-1)c \right) \left( b+(r-1)c \right)=(rc)^{k-1} \left( a+b+(r-2)c \right).
\]
Since $x\in\mathcal{P}_r$, we can eliminate $(rc)^{k-2}$, and by rearrangement, we get
\[
(a-c)(b-c)=0,
\]
which implies that $a=c$ or $b=c$. Without loss of generality, we may suppose $b=c$, and after a coordinate permutation, $x$ has the form \eqref{ac}, as desired.

If $x\in\partial\mathcal{P}_r$, then there exists a $r$-tuple $I\subset\{1,...,n\}$ such that $s_I(x)=0$, so \(\Pi_r(x)=0\) and equality holds in inequality \eqref{M4} if and only if $\prod_{i=1}^k s_{I_i}(x)=0$ for any $k$ pairwise disjoint $r$-tuples $I_1,...,I_k\subset\{1,...,n\}$.

If $x$ contains no negative coordinates, we will show that equality holds in inequality \eqref{M4} if and only if $x$ contains at least $n-k+1$ zeros, or equivalently,
\[
d:=|\{1\leq i\leq n: x_i>0\}|\le k-1.
\]
If \(d\le k-1\), then in any \(k\) pairwise disjoint $r$-tuples, at least one $r$-tuple $I\subset\{1,...,n\}$ contains no indices of positive coordinates, so $s_I(x) = 0$. Thus, the equality holds in inequality \eqref{M4}. Conversely, if \(d\ge k\), we can choose $k$ pairwise disjoint $r$-tuples $I_1,...,I_k\subset\{1,...,n\}$ such that each $r$-tuple $I_i$ contains an index of positive coordinate, where $1\leq i\leq k$. Thus, we have $\prod_{i=1}^k s_{I_i}(x) > 0$, which contradicts the equality condition.

Next, without loss of generality, we may rearrange the coordinates of $x$ as  
\begin{equation} \label{increasing}
    x_1\le x_2\le\cdots\le x_n.
\end{equation}
Since $x\in\partial\mathcal{P}_r$, we have 
\begin{equation} \label{sum = 0}
   \sum_{i=1}^r x_i = 0. 
\end{equation}
If $x$ contains negative coordinates, let $c:=x_r$, and we define
\[
L:=\{1\leq i\le n: x_i<c\}, \quad H:=\{1\leq i\leq n: x_i=c\}, \quad R:=\{1\leq i\leq n: x_i>c\}.
\]
Write \(\ell:=|L|\). By \eqref{increasing} and \eqref{sum = 0}, we can infer that $x_1<0$, $c>0$, \(1\le \ell\le r-1\), and
\begin{equation} \label{E sum = 0}
    \sum_{i\in L}x_i+(r-\ell)c=0.
\end{equation}
With the above notation, it is obvious that for any $r$-tuple $I\subset\{1,...,n\}$,
\begin{equation} \label{iff}
    s_I(x)=0 \iff L\subset I \ \ \text{and} \ \ I\setminus L\subset H.
\end{equation}
In other words, the $r$-tuple $I\subset\{1,...,n\}$ such that $s_I(x) = 0$ are exactly those containing all indices of coordinates less than \(c\), together with \(r-\ell\) additional indices of coordinates chosen from those equal to \(c\). In this case, we will prove that the equality holds in \eqref{M4} if and only if after a coordinate permutation,
\begin{equation} \label{c}
    x=c(-(r-1),\underbrace{1,\dots,1}_{kr-1}),\quad c>0.
\end{equation}
To see this, we need to prove that $\ell = 1$ and $R=\emptyset$ for equality. If \(\ell\ge 2\), we can take $k$ pairwise disjoint $r$-tuples $I_i\subset\{1,...,n\}$, $1\leq i\leq k$, such that two of them each contain at least one index in $L$. Then no $r$-tuple contains all the indices in \(L\), so by \eqref{iff}, we have $s_{I_i}(x)>0$ for any $1\leq i\leq k$, which is impossible for equality. 
If \(\ell=1\) but \(R\ne\emptyset\), we can take $k$ pairwise disjoint $r$-tuples $I_i\subset\{1,...,n\}$, $1\leq i\leq k$, such that one of them contains the unique index in $L$ and one index in $R$. Thus by \eqref{iff}, we also have $s_{I_i}(x)>0$ for any $1\leq i\leq k$, again equality is impossible. Therefore, when equality holds in inequality \eqref{M4}, except for the coordinate $x_1$, all the remaining coordinates of $x$ are equal to $c>0$. By \eqref{E sum = 0}, the exceptional coordinate $x_1=-(r-1)c$, which proves that $x$ has the form \eqref{c}. Conversely, if $x$ has the form \eqref{c}, then the equality holds clearly.

\vspace{2mm}

\textbf{Case 2.} $n>kr$, $1<k,r<n$. In this case, we can select any $(kr)$-tuple $I = \{i_1,...,i_{kr}\} \subset\{1,...,n\}$ with $i_1<\cdots<i_{kr}$. Let 
\begin{equation*}
    x^I := (x_1^I,...,x_{kr}^I)\in\mathbb{R}^{kr},
\end{equation*}
where $x_l^I =  x_{i_l}$, $\forall 1\leq l\leq kr$, then we have $x^I\in\overline{\mathcal{P}}_r$. Thus by the result of the first case, we have
    \begin{equation} \label{n=kr local}
        \frac{\sigma_k(x^I)}{C_{kr}^k} \geq \frac{\Pi_r^{\frac{k}{C_{kr}^r}}(x^I)}{r^k}.
    \end{equation}
    Summing \eqref{n=kr local} over all $(kr)$-tuples $I \subset\{1,...,n\}$, we obtain
    \begin{equation} \label{2.56}
       \frac{1}{C_{kr}^k} \sum_{\substack{I\subset\{1,\ldots,n\}\\|I|=kr}} \sigma_k(x^I) \geq \frac{1}{r^k} \sum_{\substack{I\subset\{1,\ldots,n\}\\|I|=kr}} \Pi_r^{\frac{k}{C_{kr}^r}}(x^I).
    \end{equation}
   Since $C_{n}^{kr}\cdot C_{kr}^k = C_{n}^k \cdot C_{n-k}^{kr-k}$, we have 
    \begin{equation} \label{2.57}
        \sum_{\substack{I\subset\{1,\ldots,n\}\\|I|=kr}} \sigma_k(x^I) = C_{n-k}^{kr-k} \sigma_k(x) \geq 0,
    \end{equation}
    and by the AM-GM inequality, we have
    \begin{equation} \label{2.58}
        \sum_{\substack{I\subset\{1,\ldots,n\}\\|I|=kr}} \Pi_r^{\frac{k}{C_{kr}^r}}(x^I) \geq C_n^{kr} \left( \prod_{\substack{I\subset\{1,\ldots,n\}\\|I|=kr}} \Pi_r^{\frac{k}{C_{kr}^r}}(x^I) \right)^\frac{1}{C_n^{kr}} = C_n^{kr} \Pi_r^\frac{kC_{n-r}^{kr-r}}{C_{kr}^r C_n^{kr}}(x) = C_n^{kr} \Pi_r^\frac{k}{C_n^r}(x).
    \end{equation}
    Combining \eqref{2.56}, \eqref{2.57} and \eqref{2.58}, we obtain
    \begin{equation}
      \frac{\sigma_k(x)}{C_n^k} = \frac{C_{n-k}^{kr-k}}{C_{kr}^k C_n^{kr}} \sigma_k(x) \geq \frac{\Pi_r^\frac{k}{C_n^r}(x)}{r^k},
    \end{equation}
    as desired. 
    
     Finally, we discuss the equality condition of inequality \eqref{M4}. From the above proof, analogous to the first case, we can also obtain that equality holds in inequality \eqref{M4} if and only if all products of the form $\prod_{i=1}^k s_{I_i}(x)$ are equal, for any $k$ pairwise disjoint $r$-tuples $I_1,...,I_k\subset\{1,...,n\}$. Below, based on this, we present the more refined equality condition stated in Theorem \ref{Th1.5}.
    
If $x\in\mathcal{P}_r$, then \(s_I(x)>0\) for any $r$-tuple $I\subset\{1,...,n\}$. Suppose equality holds, we choose any $r$-tuples \(I,J\subset\{1,...,n\}\) differing in one index, i.e., \(|I\cap J|=r-1\), so \(|I\cup J|=r+1\). Since \(n\ge kr+1\), we have
\[     
|\{1,...,n\}\setminus(I\cup J)|=n-r-1\ge (k-1)r.
\]
Hence, we can choose $k-1$ pairwise disjoint $r$-tuples $L_1,...,L_{k-1}\subset\{1,...,n\}\setminus(I\cup J)$, and by the equality condition, we have
\[
s_I(x) \prod_{i=1}^{k-1} s_{L_i}(x) = s_J(x) \prod_{i=1}^{k-1} s_{L_i}(x).
\]
Since \(s_{L_i}(x)>0\) for any $1\leq i\leq k-1$, we may cancel the common factors to yield
\[
s_I(x)=s_J(x).
\]
Thus, we can easily obtain $x_i=x_j$ for any $i\neq j$.

 If \(x\in\partial\mathcal{P}_r\), the discussion of the case where $x$ contains no negative coordinates is the same as that of the first case when $n=kr$, so in what follows we only discuss the case where $x$ contains negative coordinates. In fact, if $x$ contains negative coordinates, we can show that inequality \eqref{M4} is strict when $n>kr$. To see this, we first take an index \(i_0\in L\). Since \(n-1\ge kr\), one can choose \(k\) pairwise disjoint $r$-tuples $I_1,...,I_k \subset \{1,...,n\}\setminus\{i_0\}$. Each $I_i$ does not contain \(i_0\in L\), where $1\leq i\leq k$, so by \eqref{iff}, we have $s_{I_i}(x) > 0$ for any $1\leq i\leq k$. Thus we have $\prod_{i=1}^k s_{I_i}(x) > 0$, which is impossible for equality in inequality \eqref{M4}.

Thus, we complete the whole proof of Theorem \ref{Th1.5}.
\end{proof}

\begin{remark}
    For inequality \eqref{M4} in the case $n=kr$, we note that the equality condition is rather different from those in the case $n>kr$ and for the other inequalities, in that it admits one additional degree of freedom.
\end{remark}

Finally, we give the proof of Corollary \ref{Co1.5} based on the proof of Theorem \ref{Th1.5}.
  
\begin{proof} [Proof of Corollary \ref{Co1.5}]
    For any $x\in\mathcal{P}_r$, we need to prove that $\sigma_j(x)>0$ for any $1\leq j\leq k$. For any given $1\leq j\leq k$, we select any $(jr)$-tuple $I = \{i_1,...,i_{jr}\} \subset\{1,...,n\}$ with $i_1<\cdots<i_{jr}$. Let 
\begin{equation*}
    x^I := (x_1^I,...,x_{jr}^I)\in\mathbb{R}^{jr},
\end{equation*}
where $x_l^I = x_{i_l}$, $\forall 1\leq l\leq kr$, we have $x^I\in\mathcal{P}_r$, then analogous to the proof of \eqref{S}, we can show that $\sigma_j(x^I)>0$.
    
    Since $C_{n}^{jr}\cdot C_{jr}^j = C_{n}^j \cdot C_{n-j}^{jr-j}$, we can obtain 
    \begin{equation}
        \sum_{\substack{I\subset\{1,\ldots,n\}\\|I|=jr}} \sigma_j(x^I) = C_{n-j}^{jr-j} \sigma_j(x) > 0,
    \end{equation}
    then $\sigma_j(x)>0$ for any $1\leq j\leq k$. Thus, we have $x\in\Gamma_k$, as desired.
\end{proof}

\begin{remark} \label{remark 3.11}
    In fact, we can show that the inclusion $\mathcal{P}_r \subset \Gamma_k$ in Corollary \ref{Co1.5} is strict, namely $\mathcal{P}_r \subsetneq \Gamma_k$. To see this, we take 
    $$
    x^{\ast} = \big( -\frac{1}{k+\epsilon},\underbrace{0,...,0}_{n-k-1},\underbrace{1,...,1}_{k} \big),
    $$
    where $\epsilon>0$. Clearly, we have $x^\ast\notin\mathcal{P}_r$, since $n-k-1\geq kr-k-1\geq r-1$. Next, we show that $x^\ast\in\Gamma_k$. For any $1\leq j\leq k$, we have
    \begin{equation}
       \begin{aligned}
             \sigma_j(x^\ast) &= C_k^j - \frac{1}{k+\epsilon} C_k^{j-1} \\
             & = C_k^{j-1} \left( \frac{k-j+1}{j} - \frac{1}{k+\epsilon} \right) \\
             & \geq C_k^{j-1} \left( \frac{1}{j} - \frac{1}{k+\epsilon} \right) \\
             & > 0,
       \end{aligned}
    \end{equation}
    thus $x^\ast\in\Gamma_k$.
\end{remark}

Finally, as mentioned in the introduction, we give the following remark to show that not all Maclaurin means satisfy an inequality relation on $\overline{K^+}$ when $n\geq5$.

\begin{remark} \label{n geq 5} 
    For $n\geq5$, $k+l\leq n$ and $kl>n$, where $1<k,l<n-1$, we first show that the $k$-convex cone $\mathcal{P}_k$ and the G$\mathring{\text{a}}$rding's cone $\Gamma_l$ have no inclusion relation. To see this, we can take 
    $$
    x^\ast = (-(k-1)+\epsilon,\underbrace{1,...,1}_{n-1}),
    $$ 
    where $0<\epsilon<\frac{kl-n}{l}$. Clearly, we have $x^\ast\in\mathcal{P}_k$. Moreover, by direct calculation, we have
    \begin{equation*}
       \begin{aligned}
          \sigma_l(x^\ast) &= \left(-(k-1)+\epsilon\right) C_{n-1}^{l-1} + C_{n-1}^l \\
          &= C_{n-1}^{l-1} \left(-(k-1)+\epsilon+\frac{n-l}{l}\right) \\
          &= C_{n-1}^{l-1} \left(-\frac{kl-n}{l}+\epsilon\right) \\
          &< 0,   
       \end{aligned}
    \end{equation*}
    thus $x^\ast\notin\Gamma_l$. Moreover, similar to Remark \ref{remark 3.11}, we can take 
    $$
    x^{\ast\ast} = \big( -\frac{1}{l+\epsilon},\underbrace{0,...,0}_{n-l-1},\underbrace{1,...,1}_{l} \big),
    $$
    where $\epsilon>0$. Clearly, we have $x^{\ast\ast}\notin\mathcal{P}_{k}$, since $n-l-1\geq k-1$. Next, we show that $x^{\ast\ast}\in\Gamma_l$. For any $1\leq j\leq l$, we have
    \begin{equation}
       \begin{aligned}
             \sigma_j(x^{\ast\ast}) &= C_l^j - \frac{1}{l+\epsilon} C_l^{j-1} \\
             & = C_l^{j-1} \left( \frac{l-j+1}{j} - \frac{1}{l+\epsilon} \right) \\
             & \geq C_l^{j-1} \left( \frac{1}{j} - \frac{1}{l+\epsilon} \right) \\
             & > 0,
       \end{aligned}
    \end{equation}
    thus $x^{\ast\ast}\in\Gamma_l$.
    
    In this case, we can also show that the corresponding Maclaurin means $M_k$ and $E_{l}^\frac{1}{l}$ have no inequality relation on $\overline{K^+}$. Firstly, we take 
    $$
    x^\ast = (\epsilon,\underbrace{1,...,1}_{n-1}),
    $$
    where $\epsilon>0$. By direct calculation, we have
    \begin{equation*}
        M_k(x^\ast) = \left( \frac{\epsilon+k-1}{k} \right)^\frac{k}{n},
    \end{equation*} 
    and
    \begin{equation*}
        E_{l}^\frac{1}{l}(x^\ast) = \left( \frac{l\epsilon+n-l}{n} \right)^\frac{1}{l}.
    \end{equation*}
    Since $kl>n$, we have
    \begin{equation*}
     \frac{1}{l}-\frac{k}{n} = \frac{n-kl}{nl}<0,
    \end{equation*}
    then for sufficiently large $\epsilon > 0$, we have $M_k(x^\ast) > E_{l}^\frac{1}{l}(x^\ast)$. Moreover, if we take 
    $$
    x^{\ast\ast} = (\underbrace{0,...,0}_{k},\underbrace{1,...,1}_{n-k}),
    $$
    since $l\leq n-k$, by direct computation, we have
    \begin{equation*}
        M_k(x^{\ast\ast}) = 0 <   \left( \frac{C_{n-k}^l}{C_n^l} \right)^\frac{1}{l} = E_{l}^\frac{1}{l}(x^{\ast\ast}),
    \end{equation*}
    as desired.
\end{remark}

\vspace{3mm}

\section*{Acknowledgment}
This work is supported by the National Natural Science Foundation of China (No. 12271093), the Jiangsu Provincial Scientific Research Center of Applied Mathematics (No. BK20233002) and Shanghai Institute for Mathematics and Interdisciplinary Sciences (SIMIS) under grant number SIMIS-ID-2025-AD.


\bibliographystyle{plain}
\bibliography{ref}

\end{sloppypar}
\end{document}